% !TEX encoding = UTF-8 Unicode
% !TEX TS-program = pdflatex
% !TEX spellcheck = English [w_accents] (Aspell)
\pdfoutput=1
%%%% For AMS preprint style
\documentclass[a4paper,leqno,11pt]{amsart} 
\usepackage{amscd,color}

\theoremstyle{plain}
\newtheorem{theorem}{Theorem}

\newtheorem{lemma}[theorem]{Lemma}
\newtheorem{proposition}[theorem]{Proposition}

\numberwithin{theorem}{section}

\theoremstyle{definition}

\theoremstyle{remark}
\newtheorem{remark}[theorem]{Remark}

\author{Masanobu Kaneko} 
\address{Faculty of Mathematics, 
Kyushu University, 
Motooka 744, Nishi-ku,
Fuku\-oka 819-0395, 
Japan}
\email{mkaneko@math.kyushu-u.ac.jp}

\author{Masato Kuwata}
\address{Faculty of Economics, 
Chuo University, 
742-1 Higashinakano, 
Hachioji-shi, Tokyo 192-0393, 
Japan}
\email{kuwata@tamacc.chuo-u.ac.jp}

\subjclass[2020]{11F03,11F01,14H52,11G30,14H42,11G05}
\keywords{elliptic curve; modular function; level structure; theta function}

%%%%
\usepackage{graphicx}
\usepackage[psdextra]{hyperref} %  for pdflatex
\hypersetup{
    colorlinks=false,
    pdfborder={0 0 0},
}
\usepackage{mathrsfs}

%\title[Bianchi normal curve of level five]%
%{Bianchi normal curve of level five and several modular function fields of level ten}
\title[Bianchi's elliptic quintic curves]%
{Bianchi's elliptic quintic curves and several modular function fields of level ten}

\date{\today}

 \def\Z{\mathbf {Z}} \def\P{\mathbf {P}}
\def\Q{\mathbf {Q}}  \def\C{\mathbf {C}}

\def\uh{\mathcal{H}}

\newcommand{\SL}{\operatorname{SL}}
\newcommand{\mg}{\SL_2(\Z)}
%{\boldsymbol{e}}
\def\<{\langle}\def\>{\rangle}
\def\sltwo(#1,#2;#3,#4){\mathchoice%
{\left(\hskip -\arraycolsep%
\begin{array}{rr} #1 & #2 \\ #3 & #4 \end{array}%
\hskip -\arraycolsep\right)}
{\left(\begin{smallmatrix}#1 & #2 \\ #3 & #4 \end{smallmatrix}\right)}
{\left(\begin{smallmatrix}#1 & #2 \\ #3 & #4 \end{smallmatrix}\right)}
{\left(\begin{smallmatrix}#1 & #2 \\ #3 & #4 \end{smallmatrix}\right)}
}
\def\sltwoc(#1,#2;#3,#4){\mathchoice%
{\begin{pmatrix} #1 & #2 \\ #3 & #4 \end{pmatrix}}
{\left(\begin{smallmatrix}#1 & #2 \\ #3 & #4 \end{smallmatrix}\right)}
{\left(\begin{smallmatrix}#1 & #2 \\ #3 & #4 \end{smallmatrix}\right)}
{\left(\begin{smallmatrix}#1 & #2 \\ #3 & #4 \end{smallmatrix}\right)}
}

\makeatletter
\def\@fnsymbol#1{\ensuremath{\ifcase#1\or \dagger\or \ddagger\or
   \mathsection\or \mathparagraph\or \|\or \dagger\dagger
   \or \ddagger\ddagger \else\@ctrerr\fi}}\makeatother

\numberwithin{equation}{section}

\makeatother

\def\thin{{\hskip 1pt}}

\begin{document}

\begin{abstract}
We provide an explicit description of two torsion points on the classical Bianchi elliptic quintic curve
in terms of Ramanujan's functions.  As a byproduct, we describe generators and defining
equations of several modular function fields of level 10 using those functions.
\end{abstract}

\maketitle

\section{Introduction}

Bianchi's elliptic curve over the complex number field $\C$ is an elliptic quintic in $\P^{4}$ given by the set of five quadratic equations (see \cite{Bianchi}):
\begin{equation}\label{eq:Bianchi}
E_\phi :  \left\{\renewcommand{\arraystretch}{1.2} \begin{array}{l}
x_{0}^{2} + \phi\thin x_{2}x_{3}-\phi^{-1}x_{1}x_{4} = 0, \\
x_{1}^{2} + \phi\thin x_{3}x_{4}-\phi^{-1}x_{2}x_{0} = 0, \\
x_{2}^{2} + \phi\thin x_{4}x_{0}-\phi^{-1}x_{3}x_{1} = 0, \\
x_{3}^{2} + \phi\thin x_{0}x_{1}-\phi^{-1}x_{4}x_{2} = 0, \\
x_{4}^{2} + \phi\thin x_{1}x_{2}-\phi^{-1}x_{0}x_{3} = 0.
\end{array}\right.
\end{equation}
Here, $\phi$ is a parametrizing modular function of level 5. It is well-known that we can take as $\phi$ the renowned Rogers-Ramanujan function
\[ \phi(\tau)\,=\, q^{\frac15}\prod_{n=1}^\infty\frac{(1-q^{5n-1})(1-q^{5n-4})}{(1-q^{5n-2})(1-q^{5n-3})} 
\,=\,\cfrac{q^{\frac15}}{1+\cfrac{q}{1+\cfrac{q^2}{1+\cfrac{q^3}{\ \,\ddots}}}}\qquad(q=e^{2\pi i\tau}),  \]
which is a generator of the field of modular functions on the principal congruence subgroup $\Gamma(5)$ of the modular group $\mg$. 

The aim of this article is to show that we can give generators and defining
equations of modular function fields for several groups between $\Gamma_1(5)$ and $\Gamma(10)$ in a rather unified manner,
by using the two division points of the Bianchi elliptic curve. 
Here, for each integer $N$, we denote by $\Gamma(N), \Gamma_1(N)$ and $\Gamma_0(N)$ the standard congruence subgroups of level $N$:
\allowdisplaybreaks
\begin{alignat*}{2}
&\Gamma(N) &&=\left\{\left.\sltwo(a,b;c,d) \in SL_{2}(\Z)\,\right|\, a\equiv d\equiv 1,b\equiv c\equiv 0\bmod N\right\}, \\
&\Gamma_1(N) &&=\left\{\left.\sltwo(a,b;c,d) \in SL_{2}(\Z)\,\right|\, a\equiv d\equiv 1, c\equiv 0\bmod N\right\}, \\
&\Gamma_0(N) &&=\left\{\left.\sltwo(a,b;c,d) \in SL_{2}(\Z)\,\right|\, c\equiv 0\bmod N\right\}. \\
\end{alignat*}
Also, we define the congruence subgroup $\Gamma^{2}$ by
\[ 
\Gamma^{2}:=\left\{\!\left.\sltwo(a,b;c,d) \in SL_{2}(\Z) \right|
\sltwo(a,b;c,d)\!\equiv\!\sltwo(1,0;0,1),\sltwo(1,1;1,0),\sltwo(0,1;1,1) (\bmod 2)\!\right\},
\]
which will appear several times later.

\section{The Jacobi theta functions and the Bianchi curve}
In this section, we give an overview of how to deduce~\eqref{eq:Bianchi} using classical Jacobi's theta 
function identities\footnote{Instead of Jacobi's theta functions, Bianchi used Weierstrass's $\sigma$-functions, as Klein and Hurwitz did. 
Both are essentially the same.},
and describe addition formulas on the curve $E_\phi$.  Although most of the algebraic formulas are valid over a field of 
arbitrary characteristic, we restrict ourselves to the case of the complex number field. For a general theory of elliptic normal curves of arbitrary level, 
we refer the reader to~\cite{KKtheta}.

For real parameters $p,q$, we define the theta function $\theta_{(p,q)}(z,\tau)$ with characteristic $(p,q)$ by
$$\theta_{(p,q)}(z,\tau):=\sum_{n\in\Z}e^{\pi i(n+p)^2\tau +2\pi i(n+p)(z+q)},$$
where $z\in~\C$ and $\tau\in~\uh =$ complex upper half-plane.
Define
$$\theta_k(z,\tau)=i^{-1}\theta_{(\frac1{2}-\frac{k}5,\frac52)}(5z,5\tau)
=i^{-1}\sum_{n\in\Z}e^{5\pi i(n-\frac{k}5+\frac1{2})^2\tau
+10\pi i(n-\frac{k}5+\frac1{2})(z+\frac12)}.$$
We use the case $k=0,1,2,3,4$ and $k=\frac12,\frac32,\frac52,\frac72,\frac92.$ 
Note that $\theta_k(z,\tau)$ depends only on $k\bmod\, 5$:  $\theta_{k+5}(z,\tau)=\theta_k(z,\tau)$.  
The following formulas will often be used (see {\it e.g.}~\cite{MumfordTata}).
\begin{alignat*}{2}
&\theta_k(z+1)=(-1)^{2k+1}\theta_k(z),  &&\theta_k(z+\tau)=-e^{-5\pi i \tau-10\pi i z} \theta_k(z),\\
&\theta_k(z+\frac15)=-\zeta^{-k}\theta_k(z), &&\theta_k(z+\frac{\tau}{10})=-ie^{-\pi i\tau/20-\pi iz}\theta_{k-\frac12}(z),\\
&\theta_k(z+\frac{\tau}5)=-e^{-\pi i\tau/5-2\pi iz}\theta_{k-1}(z), \quad&& \theta_k(z+\frac{2\tau}5)=e^{-4\pi i\tau/5-4\pi iz}\theta_{k-2}(z),
\end{alignat*}
where $\zeta=e^{2\pi i/5}$. We also note
\[ \theta_k(-z)=(-1)^{2k+1}\theta_{-k}(z) \]  and thus
\begin{equation}\label{null}  
\theta_0(0)=0,\quad\theta_3(0)=-\theta_2(0),\quad \theta_4(0)=-\theta_1(0).
\end{equation}
The two functions $\theta_{0}(z,\tau)$ and $\theta_{\frac52}(z,\tau)$ are the most basic ones:
\begin{eqnarray*}
\theta_{0}(z,\tau)=\theta_{(\frac12,\frac52)}(5z,5\tau)&=&
\sum_{n\in\Z}i^{2n+1}e^{\pi i\frac14(2n+1)^2(5\tau)+\pi i(2n+1)(5z)}
\\
&=&\text{ Jacobi's } -\vartheta_1(5z,5\tau) \text{ in \cite{Jacobi:Theta}},\\
\theta_{\frac52}(z,\tau)=\theta_{(0,\frac52)}(5z,5\tau)&=&
\sum_{n\in\Z}(-1)^ne^{\pi in^2(5\tau)+2\pi in(5z)}\\
&=&\text{ Jacobi's  }\vartheta(5z,5\tau) \text{ in  \cite{Jacobi:Theta}}.
\end{eqnarray*}

Let us deduce the Bianchi equation satisfied by
$$[\theta_0(z,\tau),\theta_1(z,\tau),\theta_2(z,\tau),\theta_3(z,\tau),\theta_4(z,\tau)]$$
as well as the addition formula using the theta relations in Jacobi~\cite{Jacobi:Theta}.
We fix $\tau\in\uh$ and write $\theta_k(z,\tau)$ as $\theta_k(z)$.
We start with the ``main identity'' Jacobi \cite{Jacobi:Theta}'s (A)-(4):
\begin{eqnarray}\label{A4}
&&\theta_{\frac52}(w)\theta_{\frac52}(x)\theta_{\frac52}(y)
\theta_{\frac52}(z)-\theta_0(w)\theta_0(x)\theta_0(y)\theta_0(z)
\nonumber \\
&&\qquad\qquad=\theta_{\frac52}(w')\theta_{\frac52}(x')\theta_{\frac52}(y')
\theta_{\frac52}(z')-\theta_0(w')\theta_0(x')\theta_0(y')\theta_0(z'),
\end{eqnarray}
where
\begin{gather*}
w'=\frac12(w+x+y+z),\quad x'=\frac12(w+x-y-z),\\
y'=\frac12(w-x+y-z),\quad z'=\frac12(w-x-y+z).
\end{gather*}
(This is a direct consequence of the identity 
$w^2+x^2+y^2+z^2 = w'^2+x'^2+y'^2+z'^2$.)

By the change  $w\to w+\tau/5\ $ in \eqref{A4}   (fixing $x,y,z$), we have
$w'\to w'+\tau/10,\ x'\to x'+\tau/10,\ 
y'\to y'+\tau/10,\ z'\to z'+\tau/10$  and by the transformation formulas above we obtain
\begin{eqnarray}\label{eq2}
&&-\theta_{\frac32}(w)\theta_{\frac52}(x)\theta_{\frac52}(y)
\theta_{\frac52}(z)+\theta_4(w)\theta_0(x)\theta_0(y)\theta_0(z)
\nonumber \\
&&\qquad\qquad=\theta_2(w')\theta_2(x')\theta_2(y')\theta_2(z')-
\theta_{\frac92}(w')\theta_{\frac92}(x')\theta_{\frac92}(y')
\theta_{\frac92}(z').
\end{eqnarray}
Next we make  $w\to w+\tau/5,\,x\to x+\tau/5,\,y\to y+\tau/5,\,z\to z+\tau/5$ ($w'\to w'+2\tau/5$ and $x',y',z'$ are fixed) in  \eqref{A4}
and \eqref{eq2}  to obtain
\begin{eqnarray}\label{eq3}
&&\theta_{\frac32}(w)\theta_{\frac32}(x)\theta_{\frac32}(y)
\theta_{\frac32}(z)-\theta_4(w)\theta_4(x)\theta_4(y)\theta_4(z)
\nonumber \\
&&\qquad\qquad=\theta_{\frac12}(w')\theta_{\frac52}(x')\theta_{\frac52}(y')
\theta_{\frac52}(z')-\theta_3(w')\theta_0(x')\theta_0(y')\theta_0(z')
\end{eqnarray}
and \begin{eqnarray}\label{eq4}
&&-\theta_{\frac12}(w)\theta_{\frac32}(x)\theta_{\frac32}(y)
\theta_{\frac32}(z)+\theta_3(w)\theta_4(x)\theta_4(y)\theta_4(z)
\nonumber \\
&&\qquad\qquad=\theta_0(w')\theta_2(x')\theta_2(y')\theta_2(z')-
\theta_{\frac52}(w')\theta_{\frac92}(x')\theta_{\frac92}(y')
\theta_{\frac92}(z').
\end{eqnarray}
Applying the same transformation to these two, we get
\begin{eqnarray}\label{eq5}
&&\theta_{\frac12}(w)\theta_{\frac12}(x)\theta_{\frac12}(y)
\theta_{\frac12}(z)-\theta_3(w)\theta_3(x)\theta_3(y)\theta_3(z)
\nonumber \\
&&\qquad\qquad=\theta_{\frac72}(w')\theta_{\frac52}(x')\theta_{\frac52}(y')
\theta_{\frac52}(z')-\theta_1(w')\theta_0(x')\theta_0(y')\theta_0(z')
\end{eqnarray}
and \begin{eqnarray}\label{eq6}
&&-\theta_{\frac92}(w)\theta_{\frac12}(x)\theta_{\frac12}(y)
\theta_{\frac12}(z)+\theta_2(w)\theta_3(x)\theta_3(y)\theta_3(z)
\nonumber \\
&&\qquad\qquad=\theta_3(w')\theta_2(x')\theta_2(y')\theta_2(z')-
\theta_{\frac12}(w')\theta_{\frac92}(x')\theta_{\frac92}(y')
\theta_{\frac92}(z').
\end{eqnarray}
Then set  $w=-(x+y+z)$ in \eqref{eq2} and \eqref{eq6}, which means 
$w'=0$, $x'=-(y+z)$, $y'=-(z+x)$, $z'=-(x+y)$, to obtain
\begin{multline}\label{eq7}
-\theta_{\frac72}(x+y+z)\theta_{\frac52}(x)\theta_{\frac52}(y)
\theta_{\frac52}(z)-\theta_1(x+y+z)\theta_0(x)\theta_0(y)\theta_0(z)
 \\
=\theta_3(0)\theta_3(y+z)\theta_3(z+x)\theta_3(x+y)-
\theta_{\frac12}(0)\theta_{\frac12}(y+z)\theta_{\frac12}(z+x)
\theta_{\frac12}(x+y)
\end{multline}
and \begin{multline}\label{eq8}
\theta_{\frac12}(x+y+z)\theta_{\frac12}(x)\theta_{\frac12}(y)
\theta_{\frac12}(z)+\theta_3(x+y+z)\theta_3(x)\theta_3(y)\theta_3(z)
\\
=\theta_3(0)\theta_3(y+z)\theta_3(z+x)\theta_3(x+y)+
\theta_{\frac12}(0)\theta_{\frac12}(y+z)\theta_{\frac12}(z+x)
\theta_{\frac12}(x+y).
\end{multline}
Also set $w=x+y+z$ in \eqref{eq5}, which gives  $w'=x+y+z$, $x'=x$, $y'=y$, $z'=z$ and 
\begin{multline}\label{eq9}
\theta_{\frac12}(x+y+z)\theta_{\frac12}(x)\theta_{\frac12}(y)
\theta_{\frac12}(z)-\theta_3(x+y+z)\theta_3(x)\theta_3(y)\theta_3(z)
\\
=\theta_{\frac72}(x+y+z)\theta_{\frac52}(x)\theta_{\frac52}(y)
\theta_{\frac52}(z)-\theta_1(x+y+z)\theta_0(x)\theta_0(y)\theta_0(z).
\end{multline}
By $(\eqref{eq7}+\eqref{eq8}-\eqref{eq9})/2$, we obtain
\begin{multline}\label{eq10}
\theta_3(0)\theta_3(x+y)\theta_3(y+z)\theta_3(z+x)
\\
=\theta_3(x+y+z)\theta_3(x)\theta_3(y)\theta_3(z)-
\theta_1(x+y+z)\theta_0(x)\theta_0(y)\theta_0(z).
\end{multline}
We then set $z=-y$ in this to obtain
\begin{equation}\label{eq11}
\theta_3(0)^2\theta_3(x+y)\theta_3(x-y)
=\theta_1(x)\theta_0(x)\theta_0(y)^2-
\theta_3(x)^2\theta_2(y)\theta_3(y).
\end{equation}
And further applying $x\to x+\tau/5,\,y\to y+\tau/5$ in turn, we have
\begin{align}
\theta_3(0)^2\theta_1(x+y)\theta_3(x-y)
&=\theta_0(x)\theta_4(x)\theta_4(y)^2-
\theta_2(x)^2\theta_1(y)\theta_2(y),\label{eq12}\\
\theta_3(0)^2\theta_4(x+y)\theta_3(x-y)
&=\theta_4(x)\theta_3(x)\theta_3(y)^2-
\theta_1(x)^2\theta_0(y)\theta_1(y),\label{eq13}\\
\theta_3(0)^2\theta_2(x+y)\theta_3(x-y)
&=\theta_3(x)\theta_2(x)\theta_2(y)^2-
\theta_0(x)^2\theta_4(y)\theta_0(y),\label{eq14}\\
\theta_3(0)^2\theta_0(x+y)\theta_3(x-y)
&=\theta_2(x)\theta_1(x)\theta_1(y)^2-
\theta_4(x)^2\theta_3(y)\theta_4(y).\label{eq15}
\end{align}
Changing $x\to x+\tau/5$, we obtain in total $25$ addition formulas 
\begin{align}
\theta_3(0)^2\theta_2(x+y)\theta_2(x-y)
&=\theta_0(x)\theta_4(x)\theta_0(y)^2-
\theta_2(x)^2\theta_2(y)\theta_3(y),\label{eq16}\\
\theta_3(0)^2\theta_0(x+y)\theta_2(x-y)
&=\theta_4(x)\theta_3(x)\theta_4(y)^2-
\theta_1(x)^2\theta_1(y)\theta_2(y),\label{eq17}\\
\theta_3(0)^2\theta_3(x+y)\theta_2(x-y)
&=\theta_3(x)\theta_2(x)\theta_3(y)^2-
\theta_0(x)^2\theta_0(y)\theta_1(y),\label{eq18}\\
\theta_3(0)^2\theta_1(x+y)\theta_2(x-y)
&=\theta_2(x)\theta_1(x)\theta_2(y)^2-
\theta_4(x)^2\theta_4(y)\theta_0(y),\label{eq19}\\
\theta_3(0)^2\theta_4(x+y)\theta_2(x-y)
&=\theta_1(x)\theta_0(x)\theta_1(y)^2-
\theta_3(x)^2\theta_3(y)\theta_4(y),\label{eq20}
\end{align}
\begin{align}
\theta_3(0)^2\theta_1(x+y)\theta_1(x-y)
&=\theta_4(x)\theta_3(x)\theta_0(y)^2-
\theta_1(x)^2\theta_2(y)\theta_3(y),\label{eq21}\\
\theta_3(0)^2\theta_4(x+y)\theta_1(x-y)
&=\theta_3(x)\theta_2(x)\theta_4(y)^2-
\theta_0(x)^2\theta_1(y)\theta_2(y),\label{eq22}\\
\theta_3(0)^2\theta_2(x+y)\theta_1(x-y)
&=\theta_2(x)\theta_1(x)\theta_3(y)^2-
\theta_4(x)^2\theta_0(y)\theta_1(y),\label{eq23}\\
\theta_3(0)^2\theta_0(x+y)\theta_1(x-y)
&=\theta_1(x)\theta_0(x)\theta_2(y)^2-
\theta_3(x)^2\theta_4(y)\theta_0(y),\label{eq24}\\
\theta_3(0)^2\theta_3(x+y)\theta_1(x-y)
&=\theta_0(x)\theta_4(x)\theta_1(y)^2-
\theta_2(x)^2\theta_3(y)\theta_4(y),\label{eq25}
\end{align}
\begin{align}
\theta_3(0)^2\theta_0(x+y)\theta_0(x-y)
&=\theta_3(x)\theta_2(x)\theta_0(y)^2-
\theta_0(x)^2\theta_2(y)\theta_3(y),\label{eq26}\\
\theta_3(0)^2\theta_3(x+y)\theta_0(x-y)
&=\theta_2(x)\theta_1(x)\theta_4(y)^2-
\theta_4(x)^2\theta_1(y)\theta_2(y),\label{eq27}\\
\theta_3(0)^2\theta_1(x+y)\theta_0(x-y)
&=\theta_1(x)\theta_0(x)\theta_3(y)^2-
\theta_3(x)^2\theta_0(y)\theta_1(y),\label{eq28}\\
\theta_3(0)^2\theta_4(x+y)\theta_0(x-y)
&=\theta_0(x)\theta_4(x)\theta_2(y)^2-
\theta_2(x)^2\theta_4(y)\theta_0(y),\label{eq29}\\
\theta_3(0)^2\theta_2(x+y)\theta_0(x-y)
&=\theta_4(x)\theta_3(x)\theta_1(y)^2-
\theta_1(x)^2\theta_3(y)\theta_4(y),\label{eq30}
\end{align}
\begin{align}
\theta_3(0)^2\theta_4(x+y)\theta_4(x-y)
&=\theta_2(x)\theta_1(x)\theta_0(y)^2-
\theta_4(x)^2\theta_2(y)\theta_3(y),\label{eq31}\\
\theta_3(0)^2\theta_2(x+y)\theta_4(x-y)
&=\theta_1(x)\theta_0(x)\theta_4(y)^2-
\theta_3(x)^2\theta_1(y)\theta_2(y),\label{eq32}\\
\theta_3(0)^2\theta_0(x+y)\theta_4(x-y)
&=\theta_0(x)\theta_4(x)\theta_3(y)^2-
\theta_2(x)^2\theta_0(y)\theta_1(y),\label{eq33}\\
\theta_3(0)^2\theta_3(x+y)\theta_4(x-y)
&=\theta_4(x)\theta_3(x)\theta_2(y)^2-
\theta_1(x)^2\theta_4(y)\theta_0(y),\label{eq34}\\
\theta_3(0)^2\theta_1(x+y)\theta_4(x-y)
&=\theta_3(x)\theta_2(x)\theta_1(y)^2-
\theta_0(x)^2\theta_3(y)\theta_4(y).\label{eq35}
\end{align}
By setting $x=y=z$, we obtain the duplication formula from \eqref{eq11} to  \eqref{eq15}.
\begin{align*}
\theta_3(0)^3\theta_0(2z)
&=\theta_2(z)\theta_1(z)^3-
\theta_4(z)^3\theta_3(z),\\
\theta_3(0)^3\theta_1(2z)
&=\theta_0(z)\theta_4(z)^3-
\theta_2(z)^3\theta_1(z),\\
\theta_3(0)^3\theta_2(2z)
&=\theta_3(z)\theta_2(z)^3-
\theta_0(z)^3\theta_4(z),\\
\theta_3(0)^3\theta_3(2z)
&=\theta_1(z)\theta_0(z)^3-
\theta_3(z)^3\theta_2(z),\\
\theta_3(0)^3\theta_4(2z)
&=\theta_4(z)\theta_3(z)^3-
\theta_1(z)^3\theta_0(z).
\end{align*}
If we understand the suffix modulo 5, these can be written uniformly as 
\[ \theta_2(0)^3\theta_k(2z)
=\theta_{3k+2}(z)\theta_{3k+1}(z)^3-
\theta_{3k-1}(z)^3\theta_{3k-2}(z).\] 
From  \eqref{eq21} to  \eqref{eq25}, we have
\begin{eqnarray*}
\theta_3(0)^2\theta_1(0)\theta_0(2z)
&=&\theta_0(z)\theta_1(z)\theta_2(z)^2-
\theta_0(z)\theta_4(z)\theta_3(z)^2,\\
\theta_3(0)^2\theta_1(0)\theta_1(2z)
&=&\theta_3(z)\theta_4(z)\theta_0(z)^2-
\theta_3(z)\theta_2(z)\theta_1(z)^2,\\
\theta_3(0)^2\theta_1(0)\theta_2(2z)
&=&\theta_1(z)\theta_2(z)\theta_3(z)^2-
\theta_1(z)\theta_0(z)\theta_4(z)^2,\\
\theta_3(0)^2\theta_1(0)\theta_3(2z)
&=&\theta_4(z)\theta_0(z)\theta_1(z)^2-
\theta_4(z)\theta_3(z)\theta_2(z)^2,\\
\theta_3(0)^2\theta_1(0)\theta_4(2z)
&=&\theta_2(z)\theta_3(z)\theta_4(z)^2-
\theta_2(z)\theta_1(z)\theta_0(z)^2,
\end{eqnarray*}
or simply
\[ \theta_3(0)^2\theta_1(0)\theta_k(2z)
=\theta_{3k}(z)\theta_{3k+1}(z)\theta_{3k+2}(z)^2-
\theta_{3k}(z)\theta_{3k-1}(z)\theta_{3k-2}(z)^2 .\]

Setting $y=0$ in the addition formulas \eqref{eq11} to  \eqref{eq35}, we obtain (changing $x$ into $z$)
\begin{eqnarray*}
\theta_3(0)^2\theta_1(z)\theta_4(z)
&=&\theta_4(0)^2\theta_2(z)\theta_3(z)-
\theta_1(0)\theta_2(0)\theta_0(z)^2,\\
\theta_3(0)^2\theta_2(z)\theta_0(z)
&=&\theta_4(0)^2\theta_3(z)\theta_4(z)-
\theta_1(0)\theta_2(0)\theta_1(z)^2,\\
\theta_3(0)^2\theta_3(z)\theta_1(z)
&=&\theta_4(0)^2\theta_4(z)\theta_0(z)-
\theta_1(0)\theta_2(0)\theta_2(z)^2,\\
\theta_3(0)^2\theta_4(z)\theta_2(z)
&=&\theta_1(0)^2\theta_0(z)\theta_1(z)-
\theta_3(0)\theta_4(0)\theta_3(z)^2,\\
\theta_3(0)^2\theta_0(z)\theta_3(z)
&=&\theta_1(0)^2\theta_1(z)\theta_2(z)-
\theta_3(0)\theta_4(0)\theta_4(z)^2.
\end{eqnarray*}

These are exactly Bianchi's equation. Namely, dividing all by  $\theta_1(0)\theta_2(0)$,
noting $\theta_3(0)=-\theta_2(0)$, $\theta_4(0)=-\theta_1(0)$, and setting
\[ \phi(\tau):=-\frac{\theta_1(0)}{\theta_2(0)}=q^{\frac15} - q^{\frac65} + q^{\frac{11}5} - q^{\frac{21}5} + q^{\frac{26}5} - q^{\frac{31}5} + q^{
 \frac{36}5} - q^{\frac{46}5} + 2 q^{\frac{51}5} -\cdots, \] we have
\begin{align*}
&\theta_0(z)^2+\phi(\tau)\theta_2(z)\theta_3(z)
-\frac1{\phi(\tau)}\theta_1(z)\theta_4(z)=0,\\
&\theta_1(z)^2+\phi(\tau)\theta_3(z)\theta_4(z)
-\frac1{\phi(\tau)}\theta_2(z)\theta_0(z)=0,\\
&\theta_2(z)^2+\phi(\tau)\theta_4(z)\theta_0(z)
-\frac1{\phi(\tau)}\theta_3(z)\theta_1(z)=0,\\
&\theta_3(z)^2+\phi(\tau)\theta_0(z)\theta_1(z)
-\frac1{\phi(\tau)}\theta_4(z)\theta_2(z)=0,\\
&\theta_4(z)^2+\phi(\tau)\theta_1(z)\theta_2(z)
-\frac1{\phi(\tau)}\theta_0(z)\theta_3(z)=0,
\end{align*}
or 
\[ \theta_k(z)^2+\phi(\tau)\theta_{k+2}(z)\theta_{k-2}(z)
-\frac1{\phi(\tau)}\theta_{k+1}(z)\theta_{k-1}(z)=0\quad (k=0,1,2,3,4). \]

The map $\C\to\P^4(\C)$ given by 
\[
z\mapsto [x_0:x_1:x_2:x_3:x_4]=[\theta_0(z):\theta_1(z):\theta_2(z):\theta_3(z):\theta_4(z)] 
\]
induces an embedding of the complex torus $L_\tau:=\C/(\Z+\Z\tau)$ onto an algebraic curve in $\P^4$, and this 
becomes an elliptic curve, as reviewed in \cite{KKtheta}.
The equation is
\[
E_{\phi} : 
\left\{\renewcommand{\arraystretch}{1.3}\begin{array}{l}
x_{0}^{2} + \phi x_{2}x_{3}-\phi^{-1}x_{1}x_{4} = 0, \\
x_{1}^{2} + \phi x_{3}x_{4}-\phi^{-1}x_{2}x_{0} = 0, \\
x_{2}^{2} + \phi x_{4}x_{0}-\phi^{-1}x_{3}x_{1} = 0, \\
x_{3}^{2} + \phi x_{0}x_{1}-\phi^{-1}x_{4}x_{2} = 0, \\
x_{4}^{2} + \phi x_{1}x_{2}-\phi^{-1}x_{0}x_{3} = 0.
\end{array}\right.
\]

In terms of the coordinates, the addition formulas  \eqref{eq26} to \eqref{eq30} read (we write the addition of points simply by $+$)
\begin{equation}\label{eq:add}
 [x_0:x_1:x_2:x_3:x_4]+[y_0: y_1: y_2: y_3: y_4]=[z_0: z_1: z_2: z_3: z_4], 
\end{equation}
where
\[
{\rm A}_1 : 
\left\{\renewcommand{\arraystretch}{1.3}\begin{array}{l}
z_0=x_2x_3y_0^2-x_0^2y_2y_3, \\
z_1=x_0x_1y_3^2-x_3^2y_0y_1, \\
z_2=x_3x_4y_1^2-x_1^2y_3y_4, \\
z_3=x_1x_2y_4^2-x_4^2y_1y_2, \\
z_4=x_4x_0y_2^2-x_2^2y_4y_0.
\end{array}\right.
\]
Note that the neutral element of $E_\phi$ corresponds to $z=0$ and so by~\eqref{null} and $\phi(\tau)=-\theta_1(0)/\theta_2(0)$
it is 
\[ O=[0:\phi:-1:1:-\phi]. \]

This addition formulas $A_1$ cannot be used if $\theta_0(x-y)=0$. This is the case when  $x-y=m/5\ (0\le m\le4)$ (see \cite{KKtheta}).
By $\theta_k(z+1/5)=-\zeta^{-k}\theta_k(z)$ ($\zeta=e^{2\pi i/5}$), this happens when
\begin{multline}\label{eq:bad}
 [y_0: y_1: y_2: y_3: y_4]
 \\
 =[x_0: \zeta^{-m}x_1: \zeta^{-2m}x_2: \zeta^{-3m}x_3: \zeta^{-4m}x_4] \ \ (0\le m\le4). 
\end{multline}
Since there are no common zeros among $\theta_k(z)$, in this case we can use the following which can be derived from 
\eqref{eq21} to \eqref{eq25}:
\[
{\rm A}_2 : 
\left\{\renewcommand{\arraystretch}{1.3}\begin{array}{l}
z_0=x_1x_0y_2^2-x_3^2y_0y_4, \\
z_1=x_4x_3y_0^2-x_1^2y_3y_2, \\
z_2=x_2x_1y_3^2-x_4^2y_1y_0, \\
z_3=x_0x_4y_1^2-x_2^2y_4y_3, \\
z_4=x_3x_2y_4^2-x_0^2y_2y_1.
\end{array}\right.
\]

\section{Weierstrass form of the elliptic curve $E_\phi$}

Bianchi \cite{Bianchi} showed that the curve $E_\phi$ is birationally equivalent to the following quintic curve:
\begin{equation}\label{eq:quintic}
C_{\phi}:\phi^6 x_{0}^5 + \phi x_{1}^5 + \phi^6 x_{2}^5+ \phi^4(\phi^5+3 ) x_{0}^2 x_{1} x_{2}^2
-(2 \phi^5 + 1) x_{0} x_{2} x_{1}^3 =0. 
\end{equation}
Indeed, this can be done by eliminating $x_{3}$ and $x_{4}$ using the first and the third equations of \eqref{eq:Bianchi}.  The plane curve defined by \eqref{eq:quintic} has five ordinary double points at $(1:-\phi^{2}\zeta_{5}^{k}:1)$ ($1\le k \le 5$).

The intersection between Bianchi's elliptic curve \eqref{eq:Bianchi} and each of the hyperplane $x_{k}=0$ ($1\le k \le 5$) consists of five points defined over $\Q(\zeta_{5}, \phi)$.  
These are the $5$-torsion points. We have chosen $O=(0:\phi:-1:1:-\phi)$ as the origin of the group structure.  Its image in the plane model \eqref{eq:quintic} is $(\phi:-1:1)$.  Using this point and van Hoeij's algorithm\cite{vanHoeij} we can convert \eqref{eq:quintic} to the Weierstrass form of $E_\phi$.

\begin{proposition}\label{prop:BianchiWeier}  The elliptic curve $E_\phi$ is birationally equivalent to the Weierstrass model
\begin{align*}  
W_{\phi}:
Y^2 &= X^3-\frac1{48}(\phi^{20} -228 \phi^{15} + 494\phi^{10}+228 \phi^5+1) X 
\\
 &\qquad  +\frac1{864} (\phi^{30} +522 \phi^{25} - 10005\phi^{20}-10005 \phi^{10} -522\phi^5+1),
\end{align*}
where the rational map $E_{\phi}\mapstochar\dashrightarrow W_{\phi}$ is given by
\begin{align*} 
X&=\frac{1}{12}(\phi^{10} + 30\phi^5 + 1) 
 - \phi^2(2\phi^5 + 1)\frac{x_{1}+x_{4}}{x_{0}}- \phi^3(\phi^5 - 2)\frac{x_{2}+x_{3}}{x_{0}} \\ 
 &\qquad\quad  - 5\phi^3\,\frac{x_{2}}{x_{0}}+5\phi^4\,\frac{x_{1}(x_{1}-\phi x_{2}+ \phi x_{3}- x_{4})}{x_{0}^2}+5\phi^5\,\frac{x_{2}x_{4}}{x_{0}^2},\\  
\intertext{and}
Y &= \frac1\phi\,\frac{(\phi^{11} +\phi^6-\phi)^2(x_2-x_3)}{(7-2\phi^5)\phi^3 x_0 + (7\phi^5 +1) (x_1+x_4) 
+ (3-4 \phi^5)\phi (x_2+x_{3})}\\ 
&=\frac12\frac{(\phi^{11} + \phi^6 - \phi)^2 (x_{1} - x_{4}) }{(7 \phi^5 +1)x_0+(3 \phi^5+4)\phi^2 (x_{1}+x_4) -(\phi^5-7)\phi^3 (x_{2} +x_{3})}. 
\end{align*}

The discriminant of the curve is 
\begin{align*} 
&\frac1{1728}\left((\phi^{20} -228 \phi^{15} + 494\phi^{10}+228 \phi^5+1)^3\right. \\
& \qquad\qquad -\left.(\phi^{30} +522 \phi^{25} - 10005\phi^{20}-10005 \phi^{10} -522\phi^5+1)^2\right)\\
&=\phi^{5}(\phi^{10}-11\phi^5+1)^5.
\end{align*}
\end{proposition}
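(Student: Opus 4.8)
The plan is to work on the plane model~\eqref{eq:quintic}, which by the stated elimination of $x_3,x_4$ from the first and third equations of~\eqref{eq:Bianchi} is birational to $E_\phi$ over $k=\Q(\zeta_5,\phi)$, and to produce the Weierstrass model by a Riemann--Roch computation anchored at the marked point. First I would record that~\eqref{eq:quintic} has genus one: a smooth plane quintic has genus $\binom{4}{2}=6$, and each of the five ordinary double points at $(1:-\phi^2\zeta_5^{k}:1)$ lowers the geometric genus by one, leaving $6-5=1$, consistent with $E_\phi$ being elliptic. One checks that the origin, whose image is $(\phi:-1:1)$, is a smooth point of~\eqref{eq:quintic} distinct from the five nodes, so it is a legitimate base point.

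With $O=(\phi:-1:1)$ as base point, the Weierstrass coordinates come from the spaces of multiples of $O$: over $\bar k$ one has $L(O)=\langle 1\rangle$, $L(2O)=\langle 1,X\rangle$ and $L(3O)=\langle 1,X,Y\rangle$, where $X$ has a double and $Y$ a triple pole at $O$, and then $1,X,X^2,X^3,Y,XY,Y^2$ lie in the six-dimensional space $L(6O)$ and satisfy a single linear relation, namely the Weierstrass equation. Concretely I would run van Hoeij's integral-basis algorithm~\cite{vanHoeij} on~\eqref{eq:quintic} to obtain such $X\in L(2O)$ and $Y\in L(3O)$ as explicit elements of $k(x_1/x_0,x_2/x_0)$; normalizing their leading behaviour at $O$ and adding the constants that complete the square and the cube produces the displayed formulas for $X$ and $Y$.

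The computational heart is verifying that these explicit $X,Y$ satisfy the asserted cubic. I would substitute the given rational expressions into $Y^2-X^3-AX-B$, clear denominators, and reduce modulo the ideal generated by the five quadrics of~\eqref{eq:Bianchi} (equivalently, modulo~\eqref{eq:quintic} after expressing $x_3,x_4$ via the elimination), checking that the result vanishes identically in $k[x_0,x_1,x_2]$. The same reduction shows that the two displayed expressions for $Y$ agree on $E_\phi$, since their difference lies in the Bianchi ideal. Birationality is then immediate: the pole divisor of $X$ on $E_\phi$ is exactly $2O$, so $\C(X)$ has index two, and since $Y$ has a pole of odd order three at $O$ it cannot lie in $\C(X)$; hence $\C(X,Y)=\C(E_\phi)$ and the map to the cubic has degree one.

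The main obstacle is producing, and then certifying by hand-auditable algebra, the precise numerators and denominators of $X$ and $Y$: the correct linear combinations are not guessable a priori, which is exactly what van Hoeij's algorithm supplies, while the verification is a large but mechanical Gr\"obner-basis reduction. Finally the discriminant is obtained by forming $\tfrac1{1728}(c_4^3-c_6^2)$ from the Weierstrass coefficients, with $c_4=\phi^{20}-228\phi^{15}+494\phi^{10}+228\phi^5+1$ and the displayed $c_6$, expanding as a polynomial in $\phi^5$, and factoring. The appearance of $\phi^{10}-11\phi^5+1$ to the fifth power is the expected signature of the level-five structure, this factor being, up to a power of $\phi$, the sixth power of the eta-quotient $\eta(\tau)/\eta(5\tau)$ through Ramanujan's identity $\phi^{-5}-11-\phi^5=\eta(\tau)^6/\eta(5\tau)^6$, which both explains the factorization and provides a useful numerical cross-check of the coefficients.
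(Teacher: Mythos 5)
Your proposal follows essentially the same route as the paper: eliminate $x_3,x_4$ to obtain the nodal plane quintic~\eqref{eq:quintic}, take the image $(\phi:-1:1)$ of $O$ as base point, and apply van Hoeij's algorithm to produce the Weierstrass coordinates, with the discriminant read off from $\frac{1}{1728}(c_4^3-c_6^2)$. Your additions --- the genus count via the five nodes, the Gr\"obner-basis certification of the formulas for $X$ and $Y$ modulo the Bianchi ideal, and the odd-pole-order argument for birationality --- are sound elaborations of steps the paper leaves implicit, so the proof is correct.
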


\begin{remark}\label{rmk:3.1-inverse}
1)  The inverse rational map $W_{\phi}\mapstochar\dashrightarrow E_{\phi}$ is given by
\[
x_{1}=\frac{f_{1}}{\phi^{2}d}, \ 
x_{2}=\frac{f_{2}}{\phi^{3}d}, \ 
x_{3} = -\frac{\phi (\phi^{2} x_{0}^3 + x_{1} x_{2}^2)}{\phi^{4} x_{0} x_{2} - x_{1}^2},
\quad
x_{4} = -\frac{\phi (\phi^{2} x_{2}^3 + x_{0}^2 x_{1})}{\phi^{4} x_{0} x_{2} - x_{1}^2},
\]
where
\begin{align*}
f_{1}&=
-144 (\phi^{5} + 3) x_{0} X^2 + 288 X Y 
\\
&\qquad
+ 24 (\phi^{15} + 39 \phi^{10} - 143 \phi^{5} - 3) x_{0}^3 X 
- 24 (\phi^{10} + 66 \phi^{5} - 11) x_{0}^2 Y 
\\
&\hspace{6.5em}
- (\phi^{25} + 363 \phi^{20} + 6446 \phi^{15} + 2982 \phi^{10} + 649 \phi^{5} - 9) x_{0}^5,
\\
f_{2}&=
-144 (3 \phi^{5} - 1) x_{0} X^2 - 288 X Y 
\\
&\qquad
- 24 (3 \phi^{15} - 143 \phi^{10} - 39 \phi^{5} + 1) x_{0}^3 X 
- 24 (11 \phi^{10} + 66 \phi^{5} - 1) x_{0}^2 Y 
\\
&\hspace{6.5em}
+ (9 \phi^{25} + 649 \phi^{20} - 2982 \phi^{15} + 6446 \phi^{10} - 363 \phi^{5} + 1) x_{0}^5,
\\
d&=
1440 X^2 + 1200 (\phi^{10} + 1) x_{0}^2 X 
\\
&\hspace{8.5em}
+ 2 (89 \phi^{20} - 792 \phi^{15} - 4034 \phi^{10} + 792 \phi^{5} + 89) x_{0}^4.
\end{align*}

2)  When the base field is $\C$ and $\phi=\phi(\tau)$, we have the following identities for the coefficients of $W_\phi$:
\begin{align*}
&\phi^{20}-228 \phi^{15}+494 \phi^{10}+228 \phi^{5}+1=\frac{\eta(\tau)^{12}}{\theta_2(0)^{20}}\cdot E_4(\tau), \\
&\phi^{30}+522 \phi^{25}-10005 \phi^{20}-10005 \phi^{10}-522 \phi^{5}+1=\frac{\eta(\tau)^{18}}{\theta_2(0)^{30}}\cdot E_6(\tau),
\end{align*}
where 
\begin{align*}
E_4(\tau)&=1+240\sum_{n=1}^\infty \Bigl(\sum_{d|n}d^3\Bigr) q^n=1+240q+2160q^2+\cdots\\
\intertext{and}
E_6(\tau)&=1-504\sum_{n=1}^\infty \Bigl(\sum_{d|n}d^5\Bigr) q^n=1-504q-16632q^2-\cdots
\end{align*}
are the standard normalized Eisenstein series on the full modular group of weight 4 and 6 respectively, and
\[ \eta(\tau)=q^{\frac1{24}}\prod_{n=1}^\infty (1-q^n) \]
is the Dedekind eta function.
Hence, if we make the change of variables
\[ \frac{\theta_2(0)^{10}}{\eta(\tau)^{6}}X\to X,\quad \frac{\theta_2(0)^{15}}{\eta(\tau)^{9}}Y\to Y, \]
we see that $W_\phi$ is isomorphic to 
\[ Y^2=X^3-\frac{E_4(\tau)}{48}X+\frac{E_6(\tau)}{864}.  \]
The discriminant of this curve is 
\[  -16\biggl(4\Bigl(-\frac{E_4(\tau)}{48}\Bigr)^3 +27\Bigl( \frac{E_6(\tau)}{864}\Bigr)^2\biggr)
=\frac{E_4(\tau)^3-E_6(\tau)^2}{1728}=\eta(\tau)^{24}.  \]
\end{remark}

\begin{lemma}\label{lem:2-torsion}
The multiplication-by-$[-1]$ map on the curve \eqref{eq:Bianchi} is given by $[-1]:(x_{0}:x_{1}:x_{2}:x_{3}:x_{4})\mapsto (x_{0}:x_{4}:x_{3}:x_{2}:x_{1})$.  As a consequence $2$-torsion points other than $O$ satisfy $x_{1}=x_{4}$ and $x_{2}=x_{3}$.
\end{lemma}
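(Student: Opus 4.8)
The plan is to exploit the theta uniformization $z\mapsto[\theta_0(z):\theta_1(z):\theta_2(z):\theta_3(z):\theta_4(z)]$ of $E_\phi$ from Section~2, under which the inversion $[-1]$ is induced by $z\mapsto-z$ on the complex torus $L_\tau$. First I would record the effect of $z\mapsto-z$ on each coordinate by means of the relation $\theta_k(-z)=(-1)^{2k+1}\theta_{-k}(z)$ stated just before \eqref{null}. For integer $k$ the sign is $(-1)^{2k+1}=-1$, and since $\theta_k$ depends only on $k\bmod 5$ we may replace $\theta_{-k}$ by $\theta_{5-k}$; this gives $\theta_0(-z)=-\theta_0(z)$, $\theta_1(-z)=-\theta_4(z)$, $\theta_2(-z)=-\theta_3(z)$, $\theta_3(-z)=-\theta_2(z)$ and $\theta_4(-z)=-\theta_1(z)$.

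Reading off the image of $-z$, the inversion sends $[\theta_0(z):\theta_1(z):\theta_2(z):\theta_3(z):\theta_4(z)]$ to
\[ [-\theta_0(z):-\theta_4(z):-\theta_3(z):-\theta_2(z):-\theta_1(z)], \]
and cancelling the common factor $-1$ in $\P^4$ produces precisely the claimed formula $[x_0:x_1:x_2:x_3:x_4]\mapsto[x_0:x_4:x_3:x_2:x_1]$. As a consistency check, this involution fixes $O=[0:\phi:-1:1:-\phi]$, as it should since $O$ is $2$-torsion.

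For the consequence I would use that the $2$-torsion points are exactly the fixed points of $[-1]$. If $P=[x_0:x_1:x_2:x_3:x_4]$ satisfies $[-1]P=P$, then $(x_0,x_1,x_2,x_3,x_4)=\lambda\,(x_0,x_4,x_3,x_2,x_1)$ for some scalar $\lambda$; since the coordinate swap is an involution, applying it twice forces $\lambda^2=1$, hence $\lambda=\pm1$. In the case $\lambda=1$ one reads off $x_1=x_4$ and $x_2=x_3$, the asserted shape. The case $\lambda=-1$ forces $x_0=0$ together with $x_1=-x_4$ and $x_2=-x_3$, and the only real work is to show this contributes nothing but $O$.

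The cleanest way to dispose of the case $\lambda=-1$ is a torsion argument: by the description recalled before Proposition~\ref{prop:BianchiWeier}, the hyperplane $x_0=0$ meets $E_\phi$ only in $5$-torsion points, so any $P$ with $x_0=0$ is $5$-torsion; being simultaneously $2$-torsion, its order divides $\gcd(2,5)=1$ and therefore $P=O$. Alternatively one can substitute $x_0=0$, $x_4=-x_1$, $x_3=-x_2$ straight into \eqref{eq:Bianchi}: the first two quadrics reduce to $\phi^{-1}x_1^2=\phi\,x_2^2$ and $x_1(x_1+\phi\,x_2)=0$, which for a nonzero point force $x_1=-\phi\,x_2$ and hence $[0:-\phi:1:-1:\phi]=O$. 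Either way, every $2$-torsion point other than $O$ lies in the case $\lambda=1$ and thus satisfies $x_1=x_4$ and $x_2=x_3$.
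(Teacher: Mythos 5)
Your proof is correct and uses exactly the machinery the paper intends: the paper in fact states this lemma without proof (remarking only that the conditions $x_1=x_4$, $x_2=x_3$ are also visible from the $Y$-coordinate in Proposition~\ref{prop:BianchiWeier}), and the inversion formula follows, as you show, from the relation $\theta_k(-z)=(-1)^{2k+1}\theta_{-k}(z)$ together with the $5$-periodicity of the index, since $[-1]$ on $E_\phi$ is induced by $z\mapsto -z$ on the torus. Your careful disposal of the $\lambda=-1$ fixed points --- showing via the $5$-torsion hyperplane section $x_0=0$, or by direct substitution into \eqref{eq:Bianchi}, that they yield only $O$ --- is a point of rigor that the paper's one-line ``consequence'' leaves implicit, and both of your alternative arguments check out.
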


We remark that the conditions $x_{1}=x_{4}$ and $x_{2}=x_{3}$ is also visible from the expression of the $Y$-coordinate in Proposition~\eqref{prop:BianchiWeier}.
 
\section{$2$-division points and Bring's curve}

The equations \eqref{eq:Bianchi} may be considered to determine a surface in $\P^{4}\times \P^{1}$.  It is nothing but the elliptic modular surface $S(5)$ (in the sense of Shioda).  The $0$-section is given by $\phi\mapsto (0:\phi:-1:1:-\phi)$.  Consider the subvariety $D\subset \P^{4}\times \P^{1}$ defined by $x_{1}-x_{4}=x_{2}-x_{3}=0$. The intersection $S(5)\cap D$ is a curve and, by construction, its normalization parametrizes elliptic curves with a level $5$ structure and a distinguished $2$-torsion point different from~$O$.  In other words it is the modular curve associated with the group $\Gamma_{0}(2)\cap \Gamma(5)$.  (See Hulek \cite{Hulek:1993}.)

Letting $x_{4}=x_{1}$ and $x_{3}=x_{2}$ in \eqref{eq:Bianchi}, we have
\begin{equation}\label{eq:2-torsion}
\left\{
\begin{aligned}
&\phi^2 x_{2}^2 + \phi x_{0}^2 - x_{1}^2=0, \\
&\phi^2 x_{1} x_{2} + \phi x_{1}^2 - x_{0} x_{2}=0, \\
&\phi^2 x_{0} x_{1} + \phi x_{2}^2 - x_{1} x_{2}=0.
\end{aligned}\right.
\end{equation}
Eliminating $\phi$ from \eqref{eq:2-torsion}, we obtain a plane curve in $\P^{2}$:
\begin{equation}\label{eq:Hulek-Craig}
x_{0}^4 x_{1} x_{2} - x_{0}^2 x_{1}^2 x_{2}^2 - x_{0} x_{1}^5 - x_{0} x_{2}^5 + 2 x_{1}^3 x_{2}^3 = 0.
\end{equation}
This is a curve of genus~$4$, and known to be birational over $\Q(\zeta_{5})$ to Bring's curve defined by the equations
\[
\sum_{k=0}^{4} x_{k} = \sum_{k=0}^{4} x_{k}^{2}
=\sum_{k=0}^{4} x_{k}^{3}=0.
\]
(See \cite[Th.~3]{Dye} and \cite[Th.~1]{Braden-Northover}).  The curve \eqref{eq:Hulek-Craig} is called the Hulek-Craig model of Bring's curve.  Bring’s curve is the unique curve of genus $4$ with automorphism group $S_{5}$, and has many exceptional properties.  Its relation to modular curves is also well studied (e.g. \cite{Braden-Disney-Hogg}\cite{Weber}\cite{Horie-Yamauchi}).

Eliminating $x_{0}$ from \eqref{eq:2-torsion}, we have another model of Bring's curve in $\P^{2}\times\P^{1}$:
\begin{equation}\label{eq:Bring2}
\phi^4 x_{1}^2 x_{2} + \phi^3 x_{1}^3 + \phi x_{2}^{3} - x_{1} x_{2}^{2} = 0.
\end{equation}
Furthermore, substituting $\xi=\phi x_{2}/x_{1}$, we obtain yet another affine plane curve model of Bring's curve:
\begin{equation}\label{eq:KKmodel}
\xi^{3} - \xi^{2} + \phi^{5}\xi + \phi^{5}= 0.
\end{equation}
The roots of this equation, viewed as a cubic equation in $\xi$ over $\Q(\phi)$, supply coordinates of the $2$-torsion points
of the elliptic curve $E_\phi$.

\begin{theorem}\label{thm:2tors}
Let $g_i\ (i=1,2,3)$ be the roots of the cubic equation~\eqref{eq:KKmodel} in $\xi$.
Then the $2$-torsion points of $E_\phi$ other than the origin are
\begin{equation}\label{2tors}  [\phi^3+\phi^3/g_i:\phi:g_i:g_i:\phi]
\quad (i=1,2,3). \end{equation}
If $\phi=\phi(\tau)$, then the $g_i$'s are the modular functions given by
\begin{align*}
g_1(\tau)&=\frac{\phi(\tau)^2}{\phi(2\tau)}=1-2q+4q^2-4q^3+2q^4+2q^5-8q^6+\cdots,\\
g_2(\tau)&=-\phi(\tau/2)\phi(\tau)^2=-q^{1/2}+q+q^{3/2}-2q^2+2q^3-2q^{7/2}-\cdots,\\
g_3(\tau)&=\frac{\phi(\tau)\phi(2\tau)}{\phi(\tau/2)}=q^{1/2}+q-q^{3/2}-2q^2+2q^3+2q^{7/2}-\cdots.
\end{align*}
\end{theorem}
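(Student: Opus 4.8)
\emph{Overview.} The plan is to treat the two assertions separately: first the purely algebraic identification of the $2$-torsion coordinates with the roots of \eqref{eq:KKmodel}, and then the analytic identification of those roots with the stated modular functions.

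\emph{The algebraic part.} I would start from Lemma~\ref{lem:2-torsion}, which forces $x_1=x_4$ and $x_2=x_3$, so that the five Bianchi equations collapse to the three equations \eqref{eq:2-torsion} (the fourth and fifth equations become repetitions of the third and second). Normalizing projectively by $x_1=\phi$ (legitimate for a nontrivial $2$-torsion point, since $x_1=0$ would force $x_0=x_2=0$ in \eqref{eq:2-torsion}) and writing $x_2=g$, the substitution becomes $\xi=\phi x_2/x_1=g$, matching the one leading to \eqref{eq:KKmodel}. The second equation of \eqref{eq:2-torsion} is linear in $x_0$ and yields $x_0=\phi^3(1+1/g)=\phi^3+\phi^3/g$, which is exactly the coordinate shape claimed in \eqref{2tors}. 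Substituting this into the third equation and clearing denominators produces precisely $g^3-g^2+\phi^5g+\phi^5=0$, i.e.\ \eqref{eq:KKmodel}. It then remains to check that the first equation of \eqref{eq:2-torsion} is automatically satisfied; using $\phi^5(g+1)=g^2(1-g)$ from the cubic, the first equation reduces identically to $0=0$. Since the cubic has three roots and $E_\phi$ has exactly three nontrivial $2$-torsion points, this settles the first half.

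\emph{The analytic part.} Here I would use the theta parametrization $z\mapsto[\theta_0(z):\cdots:\theta_4(z)]$. The nontrivial $2$-torsion points of $L_\tau$ are the images of the half-periods $z=\tfrac12,\ \tfrac{\tau}2,\ \tfrac{1+\tau}2$ (at each, $-z\equiv z$ gives $x_1=x_4$, $x_2=x_3$, consistent with Lemma~\ref{lem:2-torsion}), and there $g=\phi(\tau)\,\theta_2(z)/\theta_1(z)$. The key computation is to evaluate $\theta_1,\theta_2$ at these half-periods directly from the defining series (or from the quasi-periodicity formulas of Section~2): each half-period evaluation converts $\theta_k(z,\tau)$ into a theta constant with a shifted characteristic at argument $5\tau$, which by the Jacobi triple product becomes an explicit infinite product. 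Carrying this out, $\phi(\tau)\theta_2(\tfrac12)/\theta_1(\tfrac12)$, $\phi(\tau)\theta_2(\tfrac{\tau}2)/\theta_1(\tfrac{\tau}2)$, and $\phi(\tau)\theta_2(\tfrac{1+\tau}2)/\theta_1(\tfrac{1+\tau}2)$ become products that I would recognize, after recombining factors, as $\phi(\tau)^2/\phi(2\tau)$, $-\phi(\tau/2)\phi(\tau)^2$, and $\phi(\tau)\phi(2\tau)/\phi(\tau/2)$, respectively.

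\emph{Main obstacle.} The real work is this last recognition step: the theta-constant quotients produced at the half-periods are infinite products whose exponents run over residues mod $10$, and showing that they coincide with the Rogers--Ramanujan products for $\phi$ at the rescaled arguments $\tau/2$ and $2\tau$ is exactly the content of the degree-$2$ modular equation for the Rogers--Ramanujan continued fraction. Equivalently, one may bypass the half-period evaluation and verify the three symmetric-function identities $g_1+g_2+g_3=1$, $g_1g_2+g_2g_3+g_3g_1=\phi^5$, and $g_1g_2g_3=-\phi^5$; the last is a formal cancellation using the given closed forms, while the first two are precisely the modular equations relating $\phi(\tau)$, $\phi(2\tau)$, $\phi(\tau/2)$. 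In either approach, since all functions involved are modular on a congruence subgroup between $\Gamma_1(5)$ and $\Gamma(10)$, once the leading product structure is matched the identities can be confirmed by comparing $q$-expansions up to a bound furnished by the valence formula; establishing those product (modular) identities cleanly, rather than the routine bookkeeping of the series manipulations, is where the difficulty lies.
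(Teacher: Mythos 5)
Your proposal is correct, and its ``equivalently'' branch is in fact the paper's own proof. For the algebraic half, the paper carries out exactly your elimination in the lead-up to the theorem: substituting $x_{4}=x_{1}$, $x_{3}=x_{2}$ into \eqref{eq:Bianchi} to obtain \eqref{eq:2-torsion} and then passing to $\xi=\phi x_{2}/x_{1}$ to reach \eqref{eq:KKmodel}; the proof itself only remarks that order two is ``readily seen'' from the addition formulas of \S2 together with \eqref{eq:KKmodel}, whereas your route through Lemma~\ref{lem:2-torsion} (fixed points of $[-1]$) is an equally valid and somewhat tidier way to get both directions at once, and your explicit checks (that $x_{1}\neq 0$ justifies the normalization, and that the first equation of \eqref{eq:2-torsion} follows from the cubic via $\phi^{5}(g+1)=g^{2}(1-g)$) are sound; note only that ``exactly three nontrivial $2$-torsion points'' implicitly uses nonsingularity of $E_\phi$, which the paper's Remark on the matching discriminants covers. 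For the analytic half, the paper does precisely what you describe as the bypass: it \emph{defines} $g_{i}(\tau)$ by the closed formulas and verifies $g_{1}+g_{2}+g_{3}=1$, $g_{1}g_{2}+g_{2}g_{3}+g_{3}g_{1}=\phi^{5}$, $g_{1}g_{2}g_{3}=-\phi^{5}$, citing Ramanujan's modular equations ((1.5.1) and (1.5.3) in \cite{RamanujanLost1}) for the first two identities and formal cancellation for the third --- identical to your fallback. Your primary route, evaluating $\theta_{1},\theta_{2}$ at the half-periods $\tfrac12$, $\tfrac{\tau}{2}$, $\tfrac{1+\tau}{2}$ and recognizing the theta-constant quotients via the Jacobi triple product, is genuinely different in flavor: it would make the proof self-contained by in effect re-deriving the degree-$2$ modular equations for the Rogers--Ramanujan function, at the cost of the product-recognition work you rightly flag as the main obstacle, while the paper's citation of Ramanujan buys brevity; your valence-formula remark is a legitimate rigorous finish for the $q$-expansion comparisons in either version.
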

\begin{proof} That the points~\eqref{2tors} have order two is readily seen from the addition formulas 
in \S2 and the equation~\eqref{eq:KKmodel} that the $g_i$'s satisfy.
In the case of $\phi=\phi(\tau)$, let us {\em define} $g_i(\tau)$ as given in the theorem. Then, by using the 
``modular equations'' of Ramanujan~\cite{RamanujanLost1}, we can show the identities
\[\setlength{\arraycolsep}{2pt}\renewcommand{\arraystretch}{1.3}
\begin{array}{ccc} 
g_1(\tau)+g_2(\tau)+g_3(\tau) &=&1,\\
g_1(\tau)g_2(\tau)+g_2(\tau)g_3(\tau)+g_3(\tau)g_1(\tau) &=&\phi(\tau)^5,\\
g_1(\tau)g_2(\tau)g_3(\tau) &=&-\phi(\tau)^5
\end{array}\]
hold, which means the $g_i(\tau)$'s are the roots of~\eqref{eq:KKmodel} with $\phi=\phi(\tau)$.
Specifically, we use equations~(1.5.1) and (1.5.3) in~\cite[p.~24--25]{RamanujanLost1} to show the first 
identity and (1.5.1) for the second, whereas the third one is immediate from the definition.
\end{proof}

\begin{remark}
1) The model \eqref{eq:KKmodel} is isomorphic with Weber's model 
\[
y^{5} = (x+1)x^{2}(x-1)^{-1}
\]
(\cite[Prop~3.1]{Weber}) through the change of variables $x=-\xi$, $y=-\phi$.

2)  The function
\[ -g_2(2\tau)=\phi(\tau)\phi(2\tau)^2=q-q^2-q^3+2q^4-2q^6+2q^7-\cdots,\]
is one of the ``Ramanujan's functions.''  He discovered the relation 
\begin{equation}\label{eq:Rama}  -g_2(2\tau)=\frac{1-g_1(\tau)}{1+g_1(\tau)}, \end{equation}
which is equivalent to (1.5.3) in~\cite{RamanujanLost1}.

3) The discriminant of the polynomial $x^3-x^2+\phi^5 x+\phi^5$ is 
\begin{align*}
&4\phi^5(1-11\phi^5-\phi^{10}) \\
&= -4\phi^5(\phi^2+\phi-1)(\phi^4-3\phi^3+4\phi^2-2\phi+1)(\phi^4+2\phi^3+4\phi^2+3\phi+1).
\end{align*}
This has the same roots as the discriminant of $E_\phi$. Hence, if the elliptic curve $E_\phi$ is
non-singular, the roots $g_i$ are always distinct and give all non-trivial $2$-torsions.
\end{remark}

Using these functions, we can describe several modular function fields of level $10$ in between
$\Gamma(10)$ and $\Gamma_1(5)$ and give their defining equations in a rather uniform way.

\section{Various modular function fields of level 10}

First we display the diagram that depicts the inclusions of function fields between $A_0(\Gamma_0(5))$ 
and $A_0(\Gamma(10))$, where for a congruence subgroup $G$, we denote by $A_0(G)$ the modular
function field over $\C$ associated to $G$.  The symbol such as $10\mathrm{A}^5$ is the name of the group in the
database~\cite{Congruence}, the number in front indicates the level and the superscript is the genus.
The line between the fields means the extension of the fields, and the number along side the line is
the degree of the extension.

\begin{figure}
\includegraphics[scale=0.8]{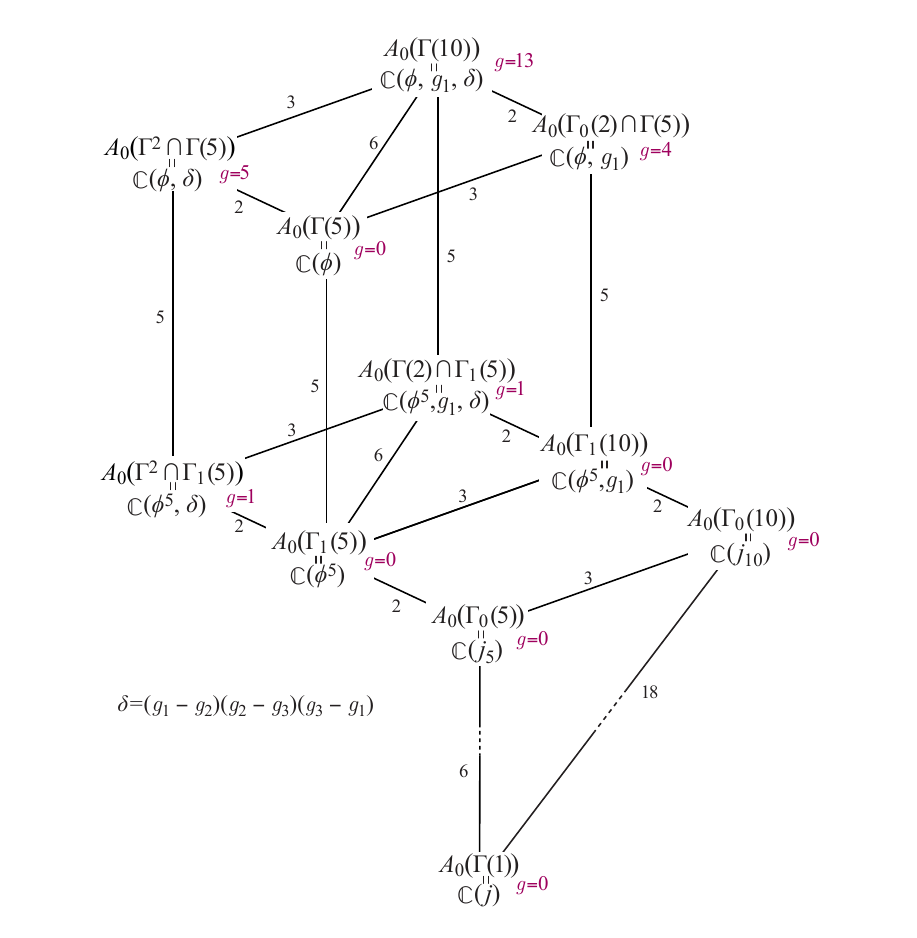}
\caption{Function fields between $A_0(\Gamma_0(5))$ and $A_0(\Gamma(10))$.}
\label{fig:function-fields}
\end{figure}

\subsection{Principal congruence subgroup of level $10$} 

We start with the largest field in the diagram, i.e., the function field of the principal congruence subgroup $\Gamma(10)$.
This group has genus $13$, and its function field and the defining equation are given for instance in~\cite{Ishida} and~\cite{Yifan}.  
Here we give another descriptions of them in terms of our functions
$\phi(\tau)$ and $g_i(\tau)$. Let $\delta(\tau)$ be the discriminant of the equation~\eqref{eq:KKmodel}:
\[ 
\delta(\tau):=(g_1(\tau)-g_2(\tau))(g_2(\tau)-g_3(\tau))(g_3(\tau)-g_1(\tau)). \]

\begin{theorem}  The function field $A_0(\Gamma(10))$ of the principal congruence subgroup $\Gamma(10)$ is given by
\[ 
A_0(\Gamma(10))\!=\!\C\bigl(\phi(\tau), g_1(\tau), \delta(\tau)\bigr)
\!=\!\C\bigl(\phi(\tau), g_1(\tau),g_2(\tau)\bigr)
\!=\!\C\Bigl(\phi(\tau), \frac{\delta(\tau)}{g_1(\tau)}\Bigr). 
\]
The defining equation satisfied by $X=\phi(\tau)$ and $Y=\delta(\tau)/2g_1(\tau)$ 
\begin{align}\label{eq:defeq10}  &Y^2(1-11X^5-X^{10}-Y^2)^2\\
&\qquad -X^5(1-11X^5-X^{10})(1-11X^5-X^{10}+Y^2)^2 = 0, \nonumber
\end{align}
which is of genus $13$.

Moreover, the function $g_1(\tau)$ can be expressed in terms of $X$ and $Y$ as
\begin{equation}\label{eq:g1byXY}  g_1(\tau)=\frac{1-11X^5-X^{10}-Y^2}{1-11X^5-X^{10}+Y^2}. \end{equation}
\end{theorem}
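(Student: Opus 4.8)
The plan is to split the statement into a purely algebraic elimination, which produces the defining equation~\eqref{eq:defeq10} and the formula~\eqref{eq:g1byXY} for $g_1$, and a field-theoretic part, which identifies $\C(\phi,g_1,\delta)$ with $A_0(\Gamma(10))$ and records the genus. The only inputs needed are the cubic~\eqref{eq:KKmodel}, i.e.\ $g_1^3-g_1^2+\phi^5 g_1+\phi^5=0$, the value of its discriminant $\delta^2=4\phi^5(1-11\phi^5-\phi^{10})$ computed in the remark following Theorem~\ref{thm:2tors}, and the fact that $\phi$ is a hauptmodul for $\Gamma(5)$, so that $\C(\phi)=A_0(\Gamma(5))$.

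For the elimination I write $X=\phi$ and abbreviate $P=1-11X^5-X^{10}$. Squaring $Y=\delta/2g_1$ gives $\delta^2=4g_1^2Y^2$, hence $g_1^2Y^2=X^5P$. I would then use this quadratic relation to lower the degree of the cubic: substituting $g_1^2=X^5P/Y^2$ into $g_1^3-g_1^2+X^5g_1+X^5=0$ and multiplying through by $Y^2/X^5$ collapses the cubic to the \emph{linear} equation $g_1(P+Y^2)=P-Y^2$, which is precisely~\eqref{eq:g1byXY}. Feeding this value of $g_1$ back into $g_1^2Y^2=X^5P$ eliminates $g_1$ entirely and yields $Y^2(P-Y^2)^2=X^5P(P+Y^2)^2$, i.e.\ the defining equation~\eqref{eq:defeq10}. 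The resulting polynomial is even in $Y$ (a cubic in $Y^2$); this is the algebraic shadow of the six Galois conjugates $\pm\delta/2g_i$ of $Y$, and it is what makes the degree in $Y$ equal to $6$.

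It remains to identify the fields and compute the degree. Since $\Gamma(10)=\Gamma(2)\cap\Gamma(5)$, I would argue that the $g_i$ and $\delta$ lie in $A_0(\Gamma(10))$ because they are the coordinates of the (now ordered) nontrivial $2$-torsion points, whose separation requires exactly the additional $\Gamma(2)$-structure; concretely this is checked from the transformation behaviour of $\phi(2\tau)$ and $\phi(\tau/2)$ appearing in $g_1,g_2,g_3$. The monodromy permuting the three roots is the image of $\Gamma(5)$ in $\SL_2(\Z/2\Z)$, which is all of $\SL_2(\Z/2\Z)\cong S_3$ because reduction gives $\Gamma(5)/\Gamma(10)\cong\SL_2(\Z/2\Z)$; hence~\eqref{eq:KKmodel} is irreducible over $\C(\phi)$ and its discriminant $4\phi^5P$ is a nonsquare (the factor $\phi^5$ occurs to odd order), so the splitting field $\C(\phi,g_1,g_2)$ has degree $6$ over $\C(\phi)$. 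As $[A_0(\Gamma(10)):A_0(\Gamma(5))]=6$ as well, containment forces equality. The identity $\C(\phi,g_1,\delta)=\C(\phi,g_1,g_2)$ is immediate from $g_2+g_3=1-g_1$ and $g_2-g_3=-\delta/f'(g_1)$ with $f(\xi)=\xi^3-\xi^2+\phi^5\xi+\phi^5$, and $\C(\phi,\delta/g_1)$ is the whole field because its six conjugates $\pm\delta/2g_i$ are distinct, so~\eqref{eq:defeq10} is irreducible of degree $6$. The genus $13$ is then that of $X(10)$, given by the standard genus formula.

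The routine part is the elimination in the second paragraph. I expect the real work to be the modular bookkeeping in the third step: verifying that $g_1,g_2,g_3$ are genuinely invariant under $\Gamma(10)$ (in particular making sense of the $q^{1/2}$-expansions of $g_2,g_3$ through $\phi(\tau/2)$) and that the monodromy is the full $S_3$. These are what pin the extension degree to exactly $6$ and thereby upgrade the algebraic relation~\eqref{eq:defeq10} into a genuine model of $A_0(\Gamma(10))$.
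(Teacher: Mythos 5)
Your proposal is correct and follows essentially the same route as the paper: the identical elimination, using $\delta(\tau)^2=4\phi(\tau)^5\bigl(1-11\phi(\tau)^5-\phi(\tau)^{10}\bigr)$ together with the cubic~\eqref{eq:KKmodel}, produces first~\eqref{eq:g1byXY} and then~\eqref{eq:defeq10}, while the field identification rests, exactly as in the paper, on $\phi(\tau)$ generating $A_0(\Gamma(5))$ and on $\Gamma(5)/\Gamma(10)\cong S_3$. Your two additions are refinements rather than a different method: you make explicit why the splitting field of the cubic has full degree $6$ over $\C\bigl(\phi(\tau)\bigr)$ (faithful $S_3$ monodromy, equivalently irreducibility plus the nonsquare discriminant), a point the paper asserts only tersely, and you deduce genus $13$ from the standard genus formula for $X(10)$, whereas the paper verifies it computationally with Maple's \texttt{algcurves} package.
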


\begin{proof}
Since we know that the $\phi(\tau)$ is a generator of $A_0(\Gamma(5))$ ({\it cf.} \cite{Duke}), we see from the formulas in Theorem~\ref{thm:2tors}
for $g_i(\tau)$ by $\phi(\tau)$ that all $\phi(\tau), g_i(\tau), \delta(\tau)$ belong to $A_0(\Gamma(10))$. Since the quotient group $\Gamma(5)/\Gamma(10)$
is isomorphic to the symmetric group $S_3$, we can conclude that $A_0(\Gamma(10))$ is obtained from $A_0(\Gamma(5))$
by adjoining all the roots $g_1(\tau), g_2(\tau), g_3(\tau)$ of ~\eqref{eq:KKmodel}, or one root and the discriminant:
\[ 
A_0(\Gamma(10))=\C\bigl(\phi(\tau), g_1(\tau),g_2(\tau)\bigr)
=\C\bigl(\phi(\tau), g_1(\tau), \delta(\tau)\bigr). 
\]
(Note the relation $g_1(\tau)+g_2(\tau)+g_3(\tau) =1$.) To show that this field is generated by the two elements $X=\phi(\tau)$ and $Y=\delta(\tau)/2g_1(\tau)$,
it is sufficient to show the identity~\eqref{eq:g1byXY}.  First, the square of the discriminant of~\eqref{eq:KKmodel} can be written as a polynomial of the coefficients as
\begin{equation}\label{eq:delsq}  
\delta(\tau)^2=4\phi(\tau)^5\bigl(1-11\phi(\tau)^5-\phi(\tau)^{10}\bigr),  
\end{equation}
which is equivalent to 
\begin{equation}\label{eq:delsq2} Y^2=X^5(1-11X^5-X^{10})/g_1(\tau)^2. \end{equation}
Inserting this into the right-hand side of~\eqref{eq:g1byXY}, we see that it is equal to 
\[ \frac{1-X^5/g_1(\tau)^2}{1+X^5/g_1(\tau)^2} = \frac{g_1(\tau)^2-\phi(\tau)^5}{g_1(\tau)^2+\phi(\tau)^5}, \]
which is identical to $g_1(\tau)$ since $g_1(\tau)$ is a root of~\eqref{eq:KKmodel}.

The identity~\eqref{eq:defeq10} is now immediate from~\eqref{eq:g1byXY} squared and~\eqref{eq:delsq2}. 
We can check by the {\tt algcurves} package in Maple that the curve~\eqref{eq:defeq10} has genus~$13$. 
\end{proof}

From the next subsection, we describe various sub-fields of~$A_0(\Gamma(10))$.

\subsection{Genus 0 groups}

In the diagram Fig.~\ref{fig:function-fields}, there are five groups which have genus 0 other than $\Gamma(1)=\mg$, in which case
the generator of $A_0(\Gamma(1))$ is the famous elliptic modular $j$-function
\[ j(\tau)=\frac1q+744+196884\thin q+21493760\thin q^2+\cdots. \] 
The generators of all those five function fields are more or less well known, possibly except for the 
group $\Gamma_1(10)$. We just list the generators. See for instance~\cite{Lee-Park} and~\cite{Duke}.
\[\setlength{\arraycolsep}{2pt}\renewcommand{\arraystretch}{2.2}
\begin{array}{lcrcl}
\Gamma_0(5)&:& j_5(\tau)&:=&\dfrac{\eta(\tau)^6}{\eta(5\tau)^6}=\dfrac1q-6+9\thin q+10\thin q^2-30\thin q^3+\cdots,\\
\Gamma_1(5)&:& \phi(\tau)^5&=& q-5\thin q^2+15\thin q^3-30\thin q^4+40\thin q^5-\cdots,\\
\Gamma_0(10)&:&\  j_{10}(\tau)&:=&\dfrac{\eta(2\tau)\eta(5\tau)^5}{\eta(\tau)\eta(10\tau)^5}=\dfrac1q+1+q+2\thin q^2+2\thin q^3-\cdots,\\
\Gamma_1(10)&:& g_1(\tau)&=&\dfrac{\phi(\tau)^2}{\phi(2\tau)}=1-2\thin q+4\thin q^2-4\thin q^3+2\thin q^4+\cdots,\\
\Gamma(5)&:& \phi(\tau)&=& q^{1/5}-q^{6/5}+q^{11/5}-q^{21/5}+q^{26/5}-q^{31/5}+\cdots\\[-2ex]
&&&=& q^{1/5}(1-q+q^2-q^4+q^5-q^6+\cdots).
\end{array}
\]
Here, $\eta(\tau)=q^{1/24}\prod_{n=1}^\infty (1-q^n)$ is the Dedekind eta function.
For $\Gamma_1(10)$, it is proved in \cite[Proposition~2.1]{Lee-Park} that the function $g_2(2\tau)$
is a generator of $A_0(\Gamma_1(10))$, from which follows that $g_1(\tau)$ is also a generator
because of Ramanujan's relation~\eqref{eq:Rama}.

Relations among $ j_5(\tau)$, $\phi(\tau)^5$, and $ j_{10}(\tau)$ are classical (see~\cite{Duke, Lee-Park}):
\[ j_5(\tau)=\frac1{\phi(\tau)^5}-11-\phi(\tau)^5=\frac{(j_{10}(\tau)+1)(j_{10}(\tau)-4)^2}{j_{10}(\tau)^2}. \]
Also, by using (1.5) and (1.6) in~\cite{Lee-Park} and~\eqref{eq:Rama}, we can deduce
\[ j_{10}(\tau)=g_2(2\tau)-\frac1{g_2(2\tau)}=\frac{4g_1(\tau)}{1-g_1(\tau)^2}. \]

\subsection{Genus 1 groups}
There are two genus 1 groups in the diagram Fig.~\ref{fig:function-fields}, namely $10\mathrm{K}^{1}=\Gamma(2)\cap \Gamma_{1}(5)$ and  $10\mathrm{D}^1$.

\begin{proposition}  Let $G_1$ be the congruence subgroup defined by
\[ 
G_1:=\left\{\left.\sltwo(a,b;c,d) \in SL_{2}(\Z)\,\right|\, 
\begin{array}{l}
a\equiv d\equiv 1\, (\bmod\thin 10), \ b\equiv 0\, (\bmod\thin{2})\\
c\equiv 0\, (\bmod\thin10) 
\end{array}\!
\right\}. 
\]
This is a normal subgroup of $\Gamma_1(5)$ of index $6$, contains $\Gamma(10)$ as a subgroup of index $5$, and is of genus $1$.
Its function field $A_0(G_1)$ is generated by $g_1(\tau)$ and $g_2(\tau)$: 
\[ 
A_0(G_1)=\C\bigl(g_1(\tau), g_2(\tau)\bigr). 
\]
The equation satisfied by $X=g_1(\tau)$ and $Y=g_2(\tau)$ is 
\begin{equation}\label{eq:g1g2}  
X^2Y+XY^2+X^2+Y^2-X-Y=0. 
\end{equation}
This is isomorphic to 
\begin{equation}\label{eq:g1g2Weier}  
Y^2=X^3+X^2-X, 
\end{equation}
which is an elliptic curve of conductor $20$.
\end{proposition}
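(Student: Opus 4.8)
The plan is to settle the group theory first, then identify the function field, and finally pass to a Weierstrass model.

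\emph{Group theory.} I would argue modulo $10$ through the Chinese Remainder isomorphism $\SL_2(\Z/10\Z)\cong\SL_2(\Z/2\Z)\times\SL_2(\Z/5\Z)$, under which $\Gamma(10)$ is the kernel of reduction $\mg\to\SL_2(\Z/10\Z)$ and is normal in $\mg$. Computing images: $\Gamma_1(5)$ maps onto $\SL_2(\Z/2\Z)\times U$, where $U=\{\sltwo(1,b;0,1)\}\subset\SL_2(\Z/5\Z)$ is the unipotent group $\cong\Z/5\Z$ (no condition mod $2$, unipotent upper triangular mod $5$), so $\Gamma_1(5)/\Gamma(10)\cong S_3\times\Z/5\Z$, of order $30$; while $G_1$ maps onto $\{I\}\times U\cong\Z/5\Z$ (the conditions $a\equiv d\equiv1$, $b\equiv c\equiv0\bmod 2$ kill the mod-$2$ part). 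This gives $[G_1:\Gamma(10)]=5$, $[\Gamma_1(5):G_1]=30/5=6$, and normality of $G_1$ in $\Gamma_1(5)$ because $\{I\}\times U$ is normal in $S_3\times\Z/5\Z$. The same computation yields $\Gamma_1(10)/\Gamma(10)\cong\Z/2\Z\times\Z/5\Z$, hence $G_1\subset\Gamma_1(10)$ with $[\Gamma_1(10):G_1]=2$, which is the index I will use below.

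\emph{The two functions and their relation.} By Theorem~\ref{thm:2tors} the $g_i$ lie in $A_0(\Gamma(10))$, and $g_1\in A_0(\Gamma_1(10))\subset A_0(G_1)$ since $g_1$ is a Hauptmodul of $\Gamma_1(10)$. For $g_2$ I note that $G_1/\Gamma(10)$ is generated by the class of $T^2=\sltwo(1,2;0,1)$ (its powers run through $b\equiv0,2,4,6,8\bmod10$), so it suffices to check $g_2(\tau+2)=g_2(\tau)$. Using $\phi(\tau+1)=\zeta\,\phi(\tau)$ with $\zeta=e^{2\pi i/5}$ one finds $\phi(\tau/2+1)=\zeta\,\phi(\tau/2)$ and $\phi(\tau+2)=\zeta^2\phi(\tau)$, whence $g_2(\tau+2)=-\zeta\,\phi(\tau/2)\cdot\zeta^4\phi(\tau)^2=g_2(\tau)$, so $g_2\in A_0(G_1)$. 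The relation~\eqref{eq:g1g2} I would then obtain formally from Theorem~\ref{thm:2tors}: with $s=g_1+g_2$, $p=g_1g_2$ and $g_3=1-s$, the identities $p+s(1-s)=\phi^5$ and $p(1-s)=-\phi^5$ give, upon eliminating $\phi^5$, exactly $p(2-s)=s(s-1)$, which is~\eqref{eq:g1g2} rewritten.

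\emph{Generation and the elliptic model.} Viewing~\eqref{eq:g1g2} as the quadratic $(X+1)Y^2+(X^2-1)Y+X(X-1)=0$ in $Y=g_2$ over $\C(X)=\C(g_1)=A_0(\Gamma_1(10))$, its $Y$-discriminant $(X-1)(X+1)(X^2-4X-1)$ has four distinct roots and so is not a square in $\C(X)$; hence the quadratic is irreducible and $[\C(g_1,g_2):\C(g_1)]=2$. On the other hand $\C(g_1,g_2)\subseteq A_0(G_1)$, and since neither $\Gamma_1(10)$ nor $G_1$ contains $-I$ the field degree equals the group index, $[A_0(G_1):A_0(\Gamma_1(10))]=[\Gamma_1(10):G_1]=2$; comparing degrees forces $A_0(G_1)=\C(g_1,g_2)$. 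Finally, the plane cubic~\eqref{eq:g1g2} has the rational point $(X,Y)=(0,0)$; taking it as origin and reducing to Weierstrass form (by hand, or by van Hoeij's algorithm as in Proposition~\ref{prop:BianchiWeier}) gives~\eqref{eq:g1g2Weier}, whose invariants $\Delta=2^4\cdot5$ and $c_4=2^6$ exhibit multiplicative reduction at $5$ and additive reduction at $2$, yielding conductor $20$; in particular the Weierstrass model is a smooth genus-$1$ curve, so the modular curve of $G_1$, whose function field we have identified with $\C(g_1,g_2)$, has genus $1$. I expect the only genuinely delicate steps to be the explicit Weierstrass transformation and the conductor exponent at the additive prime $2$ via Tate's algorithm; everything else is bookkeeping with the Chinese Remainder Theorem and the symmetric functions of Theorem~\ref{thm:2tors}.
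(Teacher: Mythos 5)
Your proposal is correct, but it reaches the main field-theoretic identity by a genuinely different route than the paper. The paper works directly over $\Gamma_1(5)$: asserting the group facts as ``easy to check,'' it uses $\Gamma_1(5)/G_1\cong S_3$ acting on the three roots of the cubic~\eqref{eq:KKmodel} to identify $A_0(G_1)$ Galois-theoretically with the splitting field $\C\bigl(\phi(\tau)^5,g_1(\tau),\delta(\tau)\bigr)=\C\bigl(g_1(\tau),g_2(\tau),g_3(\tau)\bigr)$ over $A_0(\Gamma_1(5))=\C\bigl(\phi(\tau)^5\bigr)$, cutting this down to $\C\bigl(g_1(\tau),g_2(\tau)\bigr)$ via $\phi^5=(g_1^2-g_1^3)/(1+g_1)$ and $g_1+g_2+g_3=1$; it then gives the Weierstrass reduction by the explicit substitution $X=(2-g_1-g_2)/(g_1+g_2)$, $Y=(g_1-g_2)(2-g_1-g_2)/(g_1+g_2)^2$. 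You instead interpose the intermediate field $A_0(\Gamma_1(10))=\C\bigl(g_1(\tau)\bigr)$ (legitimately borrowed from the paper's genus-$0$ subsection, via Lee--Park and~\eqref{eq:Rama}), verify by a direct transformation computation that $g_2$ is invariant under $T^2$, which together with $\Gamma(10)$ generates $G_1$, and then match the degree $[\C(g_1,g_2):\C(g_1)]=2$ --- proved by the squarefree $Y$-discriminant $(X^2-1)(X^2-4X-1)$ of the quadratic --- against $[A_0(G_1):A_0(\Gamma_1(10))]=[\Gamma_1(10):G_1]=2$, correctly noting that $-I$ lies in neither group so the field degree equals the index. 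Your derivation of~\eqref{eq:g1g2} by eliminating $g_3=1-g_1-g_2$ from $e_2=-e_3$ is identical to the paper's. The trade-offs: the paper's argument is shorter and uniform with its treatment of $A_0(\Gamma(10))$, and it supplies a closed-form map to~\eqref{eq:g1g2Weier}; your argument is more self-contained --- the Chinese Remainder computation actually proves the index, normality, and $[G_1:\Gamma(10)]=5$ claims the paper leaves to the reader, and the $T^2$-check makes explicit why functions a priori modular only for $\Gamma(10)$ are $G_1$-invariant, while the genus-$1$ assertion falls out of the smooth model rather than being stated separately. The only steps you defer (the explicit cubic-to-Weierstrass transformation from the smooth rational point $(0,0)$, and the conductor exponent at $2$ via Tate's algorithm, with $\Delta=2^4\cdot 5$ and $c_4=2^6$ correctly computed) are handled at the same level of detail in the paper itself, which likewise appeals to a standard algorithm and records the conductor by citing the LMFDB label $20.a3$.
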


\begin{proof}  The description concerning the group is easy to check.  In particular, we see that the quotient group $\Gamma_1(5)/G_1$ is isomorphic to
the symmetric group $S_3$.  We can then conclude that 
\[ 
A_0(G_1)=\C\bigl(\phi(\tau)^5, g_1(\tau), \delta(\tau)\bigr)
=\C\bigl(g_1(\tau), g_2(\tau), g_3(\tau)\bigr) 
=\C\bigr(g_1(\tau), g_2(\tau)\bigr), 
\]
paralleling to the case of $A_0(\Gamma(10))$. Here, we have used the relations
\[ \phi(\tau)^5=\frac{g_1(\tau)^2-g_1(\tau)^3}{1+g_1(\tau)}\quad\text{and}\quad g_1(\tau)+g_2(\tau)+g_3(\tau) =1.\]
The relation between $g_1(\tau)$ and $g_2(\tau)$ is obtained from 
\[ g_1(\tau)g_2(\tau)+g_2(\tau)g_3(\tau)+g_3(\tau)g_1(\tau) = -g_1(\tau)g_2(\tau)g_3(\tau) \ (=\phi(\tau)^5) \]
by eliminating $g_3(\tau) = 1-g_1(\tau)- g_2(\tau)$. Any standard algorithm may be used to transform equation~\eqref{eq:g1g2} into the Weierstrass form~\eqref{eq:g1g2Weier}.  In fact, $X$ and $Y$ in \eqref{eq:g1g2Weier} are given by
\[
X = \frac{2 - g_{1}(\tau) - g_{2}(\tau)}{g_{1}(\tau) + g_{2}(\tau)}, \quad
Y = \frac{(g_{1}(\tau) - g_{2}(\tau))(2 - g_{1}(\tau) - g_{2}(\tau))}
{(g_{1}(\tau) + g_{2}(\tau))^2}.
\qedhere
\]
\end{proof}

\begin{remark}  
1) The group $G_1$ is equal to the intersection $\Gamma(2)\cap \Gamma_{1}(5)$. Note that, if integers $a,b,c,d$ satisfy $ad-bc=1$ 
and $bc$ is even, then $a$ and $d$ are both odd and thus the congruence $a\equiv d\equiv 1\, (\bmod\, 5)$ implies $a\equiv d\equiv 1\, (\bmod\, 10)$.

2)  The elliptic curve \eqref{eq:g1g2Weier} is labeled $20.a3$ in the LMFDB~\cite{LMFDB}.
\end{remark}

\begin{proposition}\label{prop:G1}  
%Let $\Gamma^{2}$ be the congruence subgroup defined by
%\[ 
%\Gamma^{2}:=\left\{\!\left.\sltwo(a,b;c,d) \in SL_{2}(\Z) \right|
%\sltwo(a,b;c,d)\!\equiv\!\sltwo(1,0;0,1),\sltwo(1,1;1,0),\sltwo(0,1;1,1) (\bmod 2)\!\right\},
%\]
Let $G_{2}:=\Gamma^{2}\cap \Gamma_{1}(5)$.
The group $G_{2}$ is a subgroup of $\Gamma_1(5)$ of index $2$, contains $G_1$ as a subgroup of index $3$, and is of genus $1$.
Its function field $A_0(G_2)$ is generated by $\phi(\tau)^5$ and the discriminant $\delta(\tau)$,
\[
A_0(G_2)=\C\bigl(\phi(\tau)^5, \delta(\tau)\bigr), 
\]
and the defining equation satisfied by $X=-\phi(\tau)^5$ and $Y=\delta(\tau)/2$ is given by
\begin{equation}\label{eq:10D1}  
Y^2=X^3-11X^2-X. 
\end{equation}
\end{proposition}

\begin{proof}    Again, the descriptions of the group $G_2$ is easily verified. From this, we conclude that the function field $A_0(G_2)$
is the unique intermediate field between $A_0(\Gamma_1(5))=\C\bigl(\phi(\tau)^5\bigr)$ and $A_0(G_1)=\C\bigl(\phi(\tau)^5, g_1(\tau), \delta(\tau)\bigr)$ that
corresponds Galois theoretically to the subgroup $G_2/G_1$ of order $3$ of the group $\Gamma_1(5)/G_1\simeq S_3$. 
We therefore conclude $A_0(G_2)=\C\bigl(\phi(\tau)^5, \delta(\tau)\bigr)$.  The defining equation is nothing but~\eqref{eq:delsq}.
\end{proof}

\begin{remark}  1) The group $G_2$ is labeled as $10\mathrm{D}^1$ in the database~\cite{Congruence}.

2)  The minimal model of the above elliptic curve is
\[ 
Y^2=X^3+X^2-41X-116, 
\]
which has conductor $20$ and is labeled $20.a1$ in the LMFDB~\cite{LMFDB}.
\end{remark}

\subsection{Genus 4 group (Bring's curve)}

\begin{proposition}  Let $G_3$ be the congruence subgroup defined by
\[ 
G_3:=\left\{\left.\sltwo(a,b;c,d) \in SL_{2}(\Z)\,\right|\, 
\begin{array}{l}
a\equiv d\equiv 1\, (\bmod\thin 10), \ b\equiv 0\, (\bmod\thin{5})\\
c\equiv 0\, (\bmod\thin 10) 
\end{array}\!
\right\}. 
\]
This is equal to $\Gamma_0(2)\cap\Gamma(5)$ and is a subgroup of $\Gamma(5)$ of index $3$, contains $\Gamma(10)$ as a subgroup of index $2$, and is of genus $4$.
Its function field $A_0(G_3)$ is generated by $\phi(\tau)$ and $g_1(\tau)$: 
\[ 
A_0(G_3)=\C\bigl(\phi(\tau), g_1(\tau)\bigr). 
\]
The equation satisfied by $X=\phi(\tau)$ and $Y=g_1(\tau)$ is our cubic equation
\begin{equation}\label{eq:BringKK} Y^3-Y^2+X^5\thin Y+X^5=0. \end{equation}
\end{proposition}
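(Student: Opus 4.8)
The plan is to separate the group-theoretic assertions from the function-field statement, and then to read the defining equation straight off the cubic~\eqref{eq:KKmodel} that $g_1(\tau)$ already satisfies. I would first settle the group theory. For the identity $G_3=\Gamma_0(2)\cap\Gamma(5)$: a matrix $\sltwo(a,b;c,d)$ in $\Gamma_0(2)\cap\Gamma(5)$ has $c\equiv 0\pmod 5$ and $c\equiv 0\pmod 2$, hence $c\equiv 0\pmod{10}$; since $c$ is even, $ad$ is odd, so $a$ and $d$ are odd, and combined with $a\equiv d\equiv 1\pmod 5$ this forces $a\equiv d\equiv 1\pmod{10}$. The reverse inclusion is immediate, so the two groups coincide. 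For the indices I would use that reduction modulo $2$ carries $\Gamma(5)$ onto $\SL_2(\Z/2)\cong S_3$ with kernel $\Gamma(5)\cap\Gamma(2)=\Gamma(10)$, whence $\Gamma(5)/\Gamma(10)\cong S_3$; the image of $G_3$ is the order-$2$ upper-triangular subgroup coming from the $\Gamma_0(2)$-condition, so $[G_3:\Gamma(10)]=2$ and $[\Gamma(5):G_3]=3$. The genus equals $4$ because $G_3=\Gamma_0(2)\cap\Gamma(5)$ is exactly the group whose modular curve was identified with Bring's curve in~\S4, and the model~\eqref{eq:KKmodel} of that curve is literally~\eqref{eq:BringKK}.

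The defining equation then comes for free: by Theorem~\ref{thm:2tors} the function $g_1(\tau)$ is a root of the cubic~\eqref{eq:KKmodel}, so $X=\phi(\tau)$ and $Y=g_1(\tau)$ satisfy $Y^3-Y^2+X^5Y+X^5=0$, which is~\eqref{eq:BringKK}. To prove $A_0(G_3)=\C(\phi,g_1)$ I would combine a containment with a degree count. Both generators lie in $A_0(G_3)$: from $G_3\subset\Gamma(5)$ we get $\phi(\tau)\in A_0(\Gamma(5))\subseteq A_0(G_3)$, and from $G_3\subset\Gamma_1(10)$ (the only extra constraint defining $G_3$ inside $\Gamma_1(10)$ being $b\equiv 0\pmod 5$) we get $g_1(\tau)\in A_0(\Gamma_1(10))\subseteq A_0(G_3)$, using that $g_1$ generates $A_0(\Gamma_1(10))$ as recorded in~\S5.2. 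Since neither $\Gamma(5)$ nor $G_3$ contains $-I$, the field-extension degree equals the group index, so $[A_0(G_3):\C(\phi)]=[\Gamma(5):G_3]=3$. On the other hand the cubic~\eqref{eq:KKmodel} is irreducible over $\C(\phi)=A_0(\Gamma(5))$, because its three roots $g_1,g_2,g_3$ are permuted transitively by $\Gamma(5)/\Gamma(10)\cong S_3$, the same $S_3$-action already used in the proof for $A_0(\Gamma(10))$. Therefore $[\C(\phi,g_1):\C(\phi)]=3$ as well, and the tower $\C(\phi)\subseteq\C(\phi,g_1)\subseteq A_0(G_3)$ of extensions of equal degree $3$ forces $\C(\phi,g_1)=A_0(G_3)$.

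The only step with genuine content is the irreducibility of~\eqref{eq:KKmodel} over $\C(\phi)$, equivalently that no single root $g_i$ is already $\Gamma(5)$-invariant. This is precisely the transitivity of the $S_3$-action on the three nontrivial $2$-torsion points (equivalently on the roots $g_1,g_2,g_3$), which the paper has already installed through the Galois isomorphism $\Gamma(5)/\Gamma(10)\cong S_3$ used for $A_0(\Gamma(10))$; everything else is congruence bookkeeping together with the standard index formula for modular function fields. As a sanity check, one could instead verify $g_1\notin\C(\phi)$ directly from its transformation under an element such as $\sltwo(1,0;5,1)\in\Gamma(5)\setminus\Gamma_1(10)$, but the transitive $S_3$-action gives the cleaner argument and simultaneously reconfirms that the degree-$3$ model~\eqref{eq:BringKK} has genus $4$, as it coincides with the Bring's-curve model~\eqref{eq:KKmodel}.
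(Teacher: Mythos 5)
Your proof is correct and follows essentially the same route as the paper: realize $A_0(G_3)$ as a degree-$3$ extension of $A_0(\Gamma(5))=\C\bigl(\phi(\tau)\bigr)$, show that $g_1(\tau)$ lies in $A_0(G_3)$ and has degree $3$ over $\C\bigl(\phi(\tau)\bigr)$, and read the defining equation off the cubic~\eqref{eq:KKmodel} via Theorem~\ref{thm:2tors}. The paper leaves the group-theoretic bookkeeping and the degree-$3$ assertion to the reader, and you have merely supplied those omitted details (the index computation through reduction mod~$2$, membership via $G_3\subseteq\Gamma_1(10)$, and irreducibility of the cubic via the transitive $S_3$-action on the nontrivial $2$-torsion), all of which are sound.
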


\begin{proof}  As before, we leave the readers for the verification of the descriptions concerning groups.

The function field $A_0(G_3)$ is then an extension of $A_0(\Gamma(5))=\C\bigl(\phi(\tau)\bigr)$ of degree $3$. By $g_1(\tau)=\phi(\tau)^2/\phi(2\tau)$, the function
  $g_1(\tau)$ is an element of $A_0(G_3)$ of degree $3$ over $\C\bigl(\phi(\tau)\bigr)$. Therefore we conclude $A_0(G_3)=\C\bigl(\phi(\tau), g_1(\tau)\bigr)$, and the
  defining equation is given by~\eqref{eq:KKmodel}.
\end{proof}

\begin{remark}  1) The group $G_3$ is labeled as $10\mathrm{B}^4$ in~\cite{Congruence}.

2)  The curve~\eqref{eq:BringKK} is a model of Bring's curve, as discussed in \S4.
\end{remark}

\subsection{Genus 5 group}

\begin{proposition}  Let $G_4:=\Gamma^{2}\cap \Gamma(5)$. 
% be the congruence subgroup defined by
%\[ 
%G_4:=\left\{\!\left.\sltwo(a,b;c,d) \in SL_{2}(\Z) \right|
%\sltwo(a,b;c,d)\!\equiv\!\sltwo(1,0;0,1),\sltwo(1,5;5,6),\sltwo(6,5;5,1) (\bmod 10)\!\right\}. 
%\]
The group $G_4$ is a subgroup of $\Gamma(5)$ of index $2$, contains $\Gamma(10)$ as a subgroup of index $3$, and is of genus $5$.
Its function field $A_0(G_4)$ is generated by $\phi(\tau)$ and $\delta(\tau)$: 
\[ 
A_0(G_3)=\C\bigl(\phi(\tau), \delta(\tau)\bigr). 
\]
The equation satisfied by $X=\phi(\tau)$ and $Y=\delta(\tau)/2$ is 
\[ 
Y^2=X^5(1-11X^5-X^{10}), 
\]
which, by changing $Y/X^2\to iY$, is birationally equivalent to
\begin{equation}\label{eq:genus5}
Y^2=X(X^{10}+11X^5-1). 
\end{equation}
\end{proposition}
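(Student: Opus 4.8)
The plan is to follow the same Galois-theoretic template used above for the genus-$1$ groups, now applied to the index-$2$ subgroup $G_4$. As in the preceding propositions I would leave the verification of the stated group-theoretic facts to the reader, remarking only that since $[\Gamma(5):G_4]=2$ while $[\Gamma(5):\Gamma(10)]=6$, the quotient $G_4/\Gamma(10)$ has order $3$ inside $\Gamma(5)/\Gamma(10)\cong S_3$, and is therefore the unique subgroup $A_3$ of order $3$.

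The crux is the identification of the fixed field. The Galois group $\Gamma(5)/\Gamma(10)\cong S_3$ acts on $A_0(\Gamma(10))$ by permuting the three roots $g_1(\tau),g_2(\tau),g_3(\tau)$ of the cubic~\eqref{eq:KKmodel}, with fixed field $A_0(\Gamma(5))=\C\bigl(\phi(\tau)\bigr)$. Under this action the discriminant $\delta(\tau)=(g_1(\tau)-g_2(\tau))(g_2(\tau)-g_3(\tau))(g_3(\tau)-g_1(\tau))$ transforms by the sign character, so it is fixed precisely by $A_3$. Since $\delta(\tau)$ satisfies the quadratic~\eqref{eq:delsq} over $\C(\phi(\tau))$ but is not itself fixed by the transpositions, we have $[\C(\phi(\tau),\delta(\tau)):\C(\phi(\tau))]=2=[S_3:A_3]$; hence $\C(\phi(\tau),\delta(\tau))$ is exactly the fixed field of $A_3=G_4/\Gamma(10)$, giving $A_0(G_4)=\C\bigl(\phi(\tau),\delta(\tau)\bigr)$.

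For the defining equation I would invoke the already-established relation~\eqref{eq:delsq}, namely $\delta(\tau)^2=4\phi(\tau)^5(1-11\phi(\tau)^5-\phi(\tau)^{10})$, which upon setting $X=\phi(\tau)$ and $Y=\delta(\tau)/2$ reads $Y^2=X^5(1-11X^5-X^{10})$. Substituting $Y=iX^2Y'$ then clears the factor $X^4$ and produces $Y'^2=X(X^{10}+11X^5-1)$, i.e.\ the model~\eqref{eq:genus5} after renaming $Y'$ to $Y$. To confirm that the genus is $5$, I would note that this is a hyperelliptic curve $Y^2=h(X)$ with $\deg h=11$, so its genus is $5$ provided $h$ is separable; writing $h(X)=X\cdot(X^{10}+11X^5-1)$ and setting $u=X^5$ reduces the second factor to $u^2+11u-1$, whose discriminant $125$ is nonzero, yielding two distinct nonzero values of $u$, each contributing five distinct fifth roots, none equal to the simple root $X=0$. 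Thus all eleven roots of $h$ are simple and the genus is $5$.

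The only step needing genuine care is the Galois identification of $\delta(\tau)$ as the invariant of the order-$3$ subgroup: one must be sure that $S_3\cong\Gamma(5)/\Gamma(10)$ really acts on the $g_i(\tau)$ by the full symmetric permutation, so that $\delta(\tau)$ picks up exactly the sign character. This follows from the fact that, by Theorem~\ref{thm:2tors}, the three $g_i(\tau)$ are the coordinates of the three nontrivial $2$-torsion points, which the level-$5$ Galois action permutes, the extension $A_0(\Gamma(10))/A_0(\Gamma(5))$ being Galois of degree $6$ with group $S_3$. Once this is in place, everything else is a direct computation resting on~\eqref{eq:delsq}.
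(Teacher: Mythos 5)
Your proposal is correct and follows essentially the same route as the paper: both identify $A_0(G_4)$ via the Galois correspondence for $\Gamma(5)/\Gamma(10)\cong S_3$ as the unique quadratic extension of $\C\bigl(\phi(\tau)\bigr)$, generated by $\delta(\tau)$, and both obtain the defining equation directly from the discriminant relation~\eqref{eq:delsq}. Your extra details --- the explicit identification of $G_4/\Gamma(10)$ with $A_3$, the sign-character argument for $\delta(\tau)$, and the separability check confirming genus $5$ --- merely fill in steps the paper leaves to the reader or delegates to the remark citing the LMFDB.
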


\begin{proof}
That the set $G_4$ is indeed a group and is a subgroup of $\Gamma(5)$ of index $2$ is easy to verify. Since $\Gamma(5)/\Gamma(10)$ is isomorphic
to $S_3$, the function field $A_0(G_4)$ corresponds Galois theoretically to the unique extension of $A_0(\Gamma(5))=\C\bigl(\phi(\tau)\bigr)$ of degree $2$,
which is $\C\bigl(\phi(\tau), \delta(\tau)\bigr)$. The equation satisfied by $X=\phi(\tau)$ and $Y=\delta(\tau)/2$ is given in Proposition~\ref{prop:G1}.
\end{proof}

\begin{remark}\label{rmk:genus5}
  1) The group $G_4$ is labeled as $10\mathrm{A}^5$ in~\cite{Congruence}.

2)  The curve~\eqref{eq:genus5} is a hyperelliptic curve of genus~$5$ with automorphism group $C_2\times A_5$.  See ``Heigher genus famillies'' section of LMFDB\cite{LMFDB}.

\end{remark}

\subsection*{Acknowledgements}
Kaneko was supported by JSPS KAKENHI Grant Numbers JP21K18141 and JP21H04430. 
Kuwata was supported by the Chuo University Grant for Special Research.

%\bibliography{KKlevel10.bib}
%\bibliographystyle{amsplain}
\providecommand{\bysame}{\leavevmode\hbox to3em{\hrulefill}\thinspace}
\def\MR#1{\relax}

\end{document}